\def\YYint#1#2#3{{\setbox0=\hbox{$#1{#2#3}{\iint}$}
    \vcenter{\hbox{$#2#3$}}\kern-.50\wd0}}
\def\Xint#1{\mathchoice
	{\XXint\displaystyle\textstyle{#1}}%
	{\XXint\textstyle\scriptstyle{#1}}%
	{\XXint\scriptstyle\scriptscriptstyle{#1}}%
	{\XXint\scriptscriptstyle\scriptscriptstyle{#1}}%
	\!\int}
\def\XXint#1#2#3{{\setbox0=\hbox{$#1{#2#3}{\int}$}
		\vcenter{\hbox{$#2#3$}}\kern-.5\wd0}}
\def\aver#1{\Xint-_{#1}}
\def\Xint#1{\mathchoice
    {\XXint\displaystyle\textstyle{#1}}%
    {\XXint\textstyle\scriptstyle{#1}}%
    {\XXint\scriptstyle\scriptscriptstyle{#1}}%
    {\XXint\scriptscriptstyle\scriptscriptstyle{#1}}%
      \!\int}
\def\XXint#1#2#3{{\setbox0=\hbox{$#1{#2#3}{\int}$}
    \vcenter{\hbox{$#2#3$}}\kern-.50\wd0}}
\def\namedlabel#1#2{\begingroup
   \def\@currentlabel{#2}%
   \label{#1}\endgroup
}
\newcommand{\rmh}[1]{\mathpalette{\raisem@th{#1}}}
\newcommand{\raisem@th}[3]{\hspace*{-1pt}\raisebox{#1}{$#2#3$}}
\newtheorem{thm}{Theorem}[section]
\newtheorem{lem}[thm]{Lemma}
\theoremstyle{definition}
\newtheorem{defn}[thm]{Definition}
\theoremstyle{remark}
\newtheorem{rem}[thm]{Remark}
\theoremstyle{example}
\numberwithin{equation}{section}
\newcommand{\R}{\mathbb{R}}
\newcommand{\ve}{\varepsilon}
\begin{document}
\title[]
 {Borderline gradient continuity for the normalized $p$-parabolic  operator}

\author{Murat Akman}
\address{University of Essex, Colchester CO4 3SQ,
United Kingdom}
\email[Murat Akman]{murat.akman@essex.ac.uk}

 \author{Agnid Banerjee}
\address{Tata Institute of Fundamental Research\\
Centre For Applicable Mathematics \\ Bangalore-560065, India}\email[Agnid Banerjee]{agnidban@gmail.com}

\author{Isidro H. Munive }
\address{CUCEI, University of Guadalajara}\email[Isidro H. Munive]{isidro.munive@academicos.udg.mx}

\thanks{A.B was supported in part by  Department of Atomic Energy,  Government of India, under
project no.  12-R \& D-TFR-5.01-0520.}


\subjclass[2010]{Primary 35K55, 35B65, 35D40.}

\begin{abstract}
In this paper, we prove  gradient continuity estimates for viscosity  solutions to  $\Delta_{p}^N u - u_t= f$ in terms of the  scaling critical $L(n+2,1 )$  norm of $f$,   where $\Delta_{p}^N$ is the game theoretic  normalized $p-$Laplacian operator defined in \eqref{pl} below.  Our main result, Theorem \ref{main} constitutes  borderline gradient  continuity estimate for $u$ in terms of the modified parabolic Riesz potential $\mathbf{P}^{f}_{n+1}$ as defined  in \eqref{Riesz} below. Moreover, for $f \in L^{m}$ with $m>n+2$, we also obtain H\"older continuity of the spatial gradient of the solution $u$,  see Theorem \ref{main1} below. This  improves  the gradient H\"older continuity result  in  \cite{AP} which considers bounded $f$.   Our main results Theorem \ref{main} and Theorem \ref{main1} are  parabolic analogues of  those in \cite{BM}. Moreover  differently from that in \cite{AP},  our  approach  is independent of the  Ishii-Lions method which is crucially used in \cite{AP}   to obtain Lipschitz estimates for   homogeneous perturbed equations as an intermediate step. \end{abstract}
\maketitle

\tableofcontents 

\section{Introduction}
The purpose of this paper is to   obtain pointwise gradient continuity estimates   for viscosity solutions to
\begin{equation}
\label{m}
\Delta_{p}^N u-u_t= f\quad \text{in $Q_1\overset{def}= B_1\times (-1,0]$},\ 1<p<\infty,
\end{equation}
in terms of the scaling critical $L(n+2,1)-$norm of $f$. Here, $\Delta_{p}^N $ denotes the  game theoretic normalized $p-$Laplace operator given by
\begin{equation}
\label{pl}
\Delta_{p}^N u\overset{def}= \bigg(\delta_{ij}+ (p-2) \frac{u_i u_j}{ |\nabla u|^2} \bigg) u_{ij},
\end{equation}
that arises in tug of war games with noise (see \cite{MPR}) and also image processing (see \cite{Do}).

It  is well known that   borderline or end-point regularity estimates play a fundamental role in the theory of elliptic and parabolic partial differential equations.  In order to put our  main result in the right perspective, we note that in 1981,  E. Stein in his  work \cite{MR607898} showed the following.
\begin{thm}\label{stein}
Let $L(n,1)$ denote the standard Lorentz space, then the following implication holds:
\[\nabla v \in L(n,1) \ \implies \ v\  \text{\emph{is continuous}}.\]  
\end{thm}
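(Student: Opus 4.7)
The plan is to reduce the continuity of $v$ to a continuity statement about a Riesz-type potential acting on $\nabla v$. Since continuity is a local property, I would first localize: fix a ball $B$, pick a cutoff $\varphi \in C_c^\infty(2B)$ with $\varphi \equiv 1$ on $B$, and pass to $\varphi v$ (whose gradient remains in $L(n,1)$). By mollifying and exploiting the absolute continuity of the $L(n,1)$-norm, it suffices to prove a modulus of continuity for smooth, compactly supported $v$ that depends only on $\|\nabla v\|_{L(n,1)}$ and tends to zero with $|x-z|$. For such $v$ the Newton-potential identity
\[
v(x) = c_n \int_{\mathbb{R}^n} \nabla v(y) \cdot \frac{x-y}{|x-y|^n}\, dy
\]
expresses $v$ as a Riesz-type integral of $\nabla v$, so the problem becomes estimating the kernel difference $K(x-y) - K(z-y)$ where $K(w) = w/|w|^n$.

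Setting $r := |x-z|$, I would split the integration into a near zone $\{|x-y| < 3r\} \cup \{|z-y| < 3r\}$ and its complement. On the near zone the crude bound $|K(x-y)| \le |x-y|^{1-n}$ together with H\"older's inequality in Lorentz spaces yields
\[
\int_{B(x,3r)} |\nabla v(y)|\, |x-y|^{1-n}\, dy \le C\, \|\nabla v\, \mathbf{1}_{B(x,3r)}\|_{L(n,1)},
\]
using that the decreasing rearrangement of $|\cdot|^{1-n}\mathbf{1}_{B(0, 3r)}$ lies in $L(n',\infty)$ with norm independent of $r$; the right-hand side tends to zero as $r \to 0$ by absolute continuity of the $L(n,1)$-norm. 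On the far zone the mean-value theorem gives $|K(x-y) - K(z-y)| \lesssim r/|x-y|^n$, and the integrand $r|\nabla v(y)|\,|x-y|^{-n}\mathbf{1}_{|x-y|>2r}$ is pointwise dominated, uniformly for small $r$, by $|\nabla v(y)|\,|x-y|^{1-n}$. The latter is integrable thanks to the Lorentz duality $L(n,1) \cdot L(n',\infty) \subset L^1$, and the integrand vanishes pointwise as $r \to 0$, so dominated convergence closes the estimate.

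The principal technical hurdle is working cleanly at the endpoint Lorentz space $L(n,1)$. Two facts must be established carefully: the absolute continuity property $\|\nabla v\, \mathbf{1}_E\|_{L(n,1)} \to 0$ as $|E| \to 0$, which follows from the integral representation $\|g\|_{L(n,1)} = \int_0^\infty g^*(s)\, s^{1/n - 1}\, ds$ combined with dominated convergence for the decreasing rearrangement; and the uniform-in-$R$ computation of the $L(n',\infty)$-norm of the truncated Riesz kernel $|y|^{1-n}\mathbf{1}_{|y|<R}$. Once both are in place, the argument produces a modulus of continuity for $v$ depending only on $\|\nabla v\|_{L(n,1)}$, and the $L(n,1)$-density of $C_c^\infty$ transfers this modulus to arbitrary $v$ with $\nabla v \in L(n,1)$, yielding continuity of (the precise representative of) $v$.
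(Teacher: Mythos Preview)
The paper does not contain a proof of Theorem~\ref{stein}; it is quoted as a known result of Stein~\cite{MR607898} and used only to motivate the analogous borderline regularity question for the normalized $p$-parabolic operator. There is therefore no ``paper's own proof'' to compare against.

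That said, your argument is sound and follows the classical route: represent a smooth compactly supported $v$ via the Riesz-type identity, split into a near and a far region, and use the Lorentz duality $L(n,1)\times L(n',\infty)\hookrightarrow L^1$ together with the fact that $|y|^{1-n}\in L(n',\infty)(\mathbb{R}^n)$ with norm independent of any truncation radius. The only point worth tightening in the write-up is the reduction step: the modulus of continuity you obtain is not a function of $\|\nabla v\|_{L(n,1)}$ alone, but rather of the quantity $r\mapsto \sup_x\|\nabla v\,\mathbf{1}_{B(x,3r)}\|_{L(n,1)}$, which vanishes as $r\to 0$ by absolute continuity of the $L(n,1)$-norm. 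This still transfers to general $v$ by mollification, since $\nabla(v*\rho_\varepsilon)=(\nabla v)*\rho_\varepsilon\to\nabla v$ in $L(n,1)$ and the representation formula then gives uniform convergence of $v*\rho_\varepsilon$ to (a continuous representative of) $v$. With that minor clarification, the proof is complete.
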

The Lorentz space $L(n,1)$ appearing in Theorem \ref{stein}  consists of those measurable functions $g$ satisfying the condition
\[
\int_{0}^{\infty} |\{x: g(x) > t\}|^{1/n} dt < \infty.
\]
Theorem \ref{stein} can be regarded as the end point case of the Sobolev-Morrey embedding.   Moreover,  Theorem \ref{stein} coupled with the standard Calderon-Zygmund theory  has the following interesting consequence.
\begin{thm}\label{st}
$\Delta u \in L(n,1) \implies \nabla u$ is continuous.
\end{thm}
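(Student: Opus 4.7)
The plan is to deduce Theorem \ref{st} from Theorem \ref{stein} by routing through classical Calder\'on--Zygmund theory on the Lorentz scale. First I would represent $u$ locally---after multiplying by a smooth cut-off if one is working on a bounded domain---as the Newtonian potential of $f := \Delta u$ plus a smooth harmonic correction. Differentiating the potential twice exhibits the second derivatives of $u$ as classical Calder\'on--Zygmund singular integrals applied to $f$, namely
\begin{equation*}
\partial_i \partial_j u = T_{ij} f + h_{ij},
\end{equation*}
where $T_{ij}$ has kernel $\partial_i \partial_j N(x-y)$ (with $N$ the Newtonian fundamental solution) and $h_{ij}$ is harmonic, hence $C^\infty$ and of no concern for continuity questions.

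Next, I would invoke the Lorentz-space boundedness of such operators. Each $T_{ij}$ is of weak type $(1,1)$ and strong type $(p,p)$ for every $1 < p < \infty$, so Marcinkiewicz real interpolation on the Lorentz scale upgrades these to continuous maps $T_{ij} : L(p,q) \to L(p,q)$ for all $1 < p < \infty$ and $1 \leq q \leq \infty$. Specialising to $p = n$ (assuming $n \geq 2$; the one-dimensional case follows at once from the fundamental theorem of calculus) and $q = 1$, I conclude $D^2 u \in L(n,1)$ locally.

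Finally, I would apply Theorem \ref{stein} componentwise to the partial derivatives of $u$. Each $v_k := \partial_k u$ satisfies $\nabla v_k = (\partial_1 \partial_k u, \ldots, \partial_n \partial_k u)$, whose entries lie in $L(n,1)$ by the preceding step. Stein's theorem then forces $v_k$ to be continuous, so $\nabla u$ is continuous, as desired.

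The only conceptually delicate ingredient is the Lorentz-space extension of Calder\'on--Zygmund boundedness; although routine from the vantage point of modern harmonic analysis, it is the decisive point, since it is precisely what makes the logarithmically refined space $L(n,1)$ stable under second-order differentiation. Working instead on the coarser scale $L^n$ would only yield $\nabla u \in \mathrm{BMO}$, which falls short of continuity and underscores why the sharpness of the Lorentz exponent $q = 1$ is essential throughout the argument.
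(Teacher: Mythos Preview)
Your proposal is correct and follows precisely the route the paper indicates: the paper does not give a detailed proof of Theorem \ref{st} but merely records it as the consequence of coupling Theorem \ref{stein} with ``standard Calder\'on--Zygmund theory,'' and your argument is exactly the natural unpacking of that sentence---represent $u$ (locally) via the Newtonian potential, pass the second derivatives through CZ operators bounded on $L(n,1)$ by Marcinkiewicz interpolation, and then feed $\nabla(\partial_k u)\in L(n,1)$ into Stein's theorem. One small expository point: the cut-off step and the ``harmonic correction'' belong to two slightly different localisations (either cut off and write $\chi u = N*\Delta(\chi u)$ with no harmonic remainder, or restrict $f$ to a ball and write $u = N*(f\mathbf{1}_B)+h$ with $h$ harmonic); mixing them is harmless here but worth separating if you write this out in full.
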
 
A similar result holds  in the parabolic situation for more general variable coefficient operators   when $f \in L(n+2, 1)$. As is well known, $"n+2"$ is the right parabolic dimension which   is dictated by the one parameter family of  parabolic scalings $\{\delta_r\}$ given by $\delta_r(x,t)= (rx, r^2 t)$ that keeps the parabolic structure invariant.  The analogue of Theorem \ref{st}  for general nonlinear and possibly degenerate elliptic and parabolic equations has required the development of a  rather sophisticated and powerful nonlinear potential theoretic methods (see for instance \cite{MR2823872,MR2900466,MR3174278} and the references therein).  The first breakthrough in this direction came up in the work  of Kuusi and Mingione in \cite{MR3004772} where they showed that the analogue of Theorem \ref{st}  holds for operators modelled after the variational $p$-Laplacian. Such a result  was subsequently generalized to $p$-Laplacian type systems by the same authors in  \cite{MR3247381}.   Such results were further extended to  degenerate parabolic equations and systems modelled on

\begin{equation}\label{degp}
\operatorname{div}(|\nabla u|^{p-2} \nabla u) - u_t =f,\ f \in L(n+2, 1)
\end{equation}
in \cite{KM1, KM2, KM3}.
 For  other local and global borderline gradient continuity results for various kinds of nonlinear equations, we refer to \cite{AB, CMa1, CMa2,  DKM, DP, K}.  Very recently,  the two of us in \cite{BM}  obtained a similar  borderline  gradient continuity result for the following inhomogeneous normalized $p$-Poisson equation 
 \[
 \Delta_p^N u = f,\ p>1, f \in L(n,1),\]
 by arguments based on non-divergence form techniques and compactness arguments which have their roots in the fundamental work of Caffarelli in \cite{Ca}.  The results in \cite{BM} sharpens the previous results obtained in \cite{APR} that dealt with $f$ belonging to subcritical function spaces.

The purpose of this work is to  obtain analogous gradient regularity results for solutions to the parabolic normalized $p-$Poisson problem  \eqref{m}. Our main results Theorem \ref{main} and Theorem \ref{main1} are parabolic counterparts of those in \cite{BM}.

We now  mention that over the last decade, there has been a growing  attention on equations of the type
\begin{equation}\label{nplap}
\Delta_p^N u-u_t=0
\end{equation} 
because of  their connections to tug-of-war games with noise. This aspect was first studied in \cite{PS} in the stationary case for the infinity Laplacian.  In recent times,   the parabolic  normalized $p-$Laplacian,  as well as its degenerate and singular variants, have been studied in a  variety of  contexts in  several papers, see \cite{A, JK, Do, BG1, BG2, BG3, HL,  PR, Ju, MPR}. See also the recent survey article \cite{P} for a more comprehensive account.  As previously mentioned,   such equations have also found applications in image processing (see for instance \cite{Do}). The  gradient H\"older continuity result for solutions to \eqref{nplap} was established in \cite{JS}.  See also \cite{IJS} for further interesting generalizations. The regularity result in \cite{JS} was subsequently extended to equations of the type
\[
\Delta_p^N u- u_t= f, \ f \in L^{\infty}
\]
in \cite{AP}.  Our main results thus further refine the result in \cite{AP} by allowing  $f$ to belong to more general (and possibly scaling critical) function space.

We would also like to mention that although most of the published works related to the equation \eqref{nplap} appeared only in recent years, we
have seen an unpublished handwritten note by N. Garofalo from 1993 where this equation along with its inhomogeneous variants were introduced.  As a matter of fact, in that note, there is a computation which leads to Lemma 3.1 in \cite{JS} that is regarding a subsolution type property ( or equivalently a quantitative maximum principle)  for the gradient.   This
is, up to our knowledge, the first time when \eqref{nplap} was studied and where  it was recognized that such an equation should have
good regularization properties.

Now a few words regarding our approach:  We  note that in general,  getting $C^{1}-$regularity result  amounts to show that the graph of $u$ can be touched by an affine function so that the error is of order $o(r)$ in a parabolic cylinder  of size $r$ for every $r$ small enough. The proof of this is based on iterative argument where one ensures improvement of flatness at  every successive scale by comparing to a solution of a  limiting equation with  more regularity.  At each step, via rescaling, it reduces to showing that if $\langle p_0,x\rangle  +u(x,t)$ solves \eqref{m} in $Q_1$ (see Section \ref{n} for relevant notations), then the oscillation of $u$ is strictly smaller in a smaller cylinder  "modulo" a linear function. This is accomplished via compactness arguments which crucially relies on apriori estimates. Such estimates in the context of $\Delta_{p}^N - \partial_t$ come from the Krylov-Safonov theory because the equation \eqref{m} lends itself a uniformly parabolic structure.  

\medskip

Now, for a $u$ that solves \eqref{m}, we have that $v=u-\langle p_0,x\rangle$ is a solution of the following perturbed equation
\begin{equation}\label{it}
\bigg(\delta_{ij} + (p-2) \frac{(v_i +(p_0)_i)(v_j+(p_0)_j)}{|\nabla v + p_0|^2} \bigg) v_{ij}- v_t=f.
\end{equation}
Therefore, in order to obtain improvement of flatness at each scale after a rescaling, it is necessary  to get uniform $C^{1}-$type estimates independent of $|p_0|$  for  the limiting equations corresponding to the case  $f \equiv 0$.  This is precisely what has been  done in \cite{AP} (see also \cite{APR})   by an adaptation of  the Ishii-Lions approach in \cite{IL},  using which  the authors   obtained uniform Lipschitz estimates  for solutions to \eqref{it} for large $|p_0|'s$ when $f=0$.  In this paper, we follow an approach which is different from that in \cite{AP,APR} and is inspired by ideas in the stationary case developed by the two of us  in \cite{BM}.  Our proofs of Theorem \ref{main} and Theorem \ref{main1}  are based   on separation of  the degenerate and the non-degenerate phase,  and  do not rely on the uniform Lipschitz estimates for equations of the type \eqref{it}.  This is inspired by ideas in \cite{W}, where an alternate proof of $C^{1, \alpha}-$regularity for the $p-$Laplace equation was given (see also \cite{Db, Le, To} for the first results on $C^{1,\alpha}$ type regularity for $p$-Laplace type equations).  Moreover, in the case when  $f$ is bounded, compared to that in \cite{AP},  our method  also provides a different  proof of the $C^{1, \alpha}-$ type regularity result for \eqref{m}.  Finally we mention that although our proofs follow some key ideas for the stationary case as  previously used in  \cite{BM}, nevertheless it entails some delicate adaptations  which are intrinsic to the parabolic situation. For instance, the proofs of the main approximation lemmas, i.e.  Lemma \ref{app1} and Lemma \ref{rt} involve compactness arguments that crucially rely   on stability results which are established in the course of this work.  It is to be mentioned that the  classical stability results for viscosity solutions as in \cite{CCS} do not apply in our situation because of the non-smooth dependence of the normalized $p$-Laplace operator on the \say{gradient} variable.  An alternate characterization of viscosity solutions to the homogeneous equation \eqref{nplap} as in Lemma \ref{equiv1} below plays a pervasive role in the proof of such stability arguments.   In closing,  we refer to \cite{AS, HL, LY} for similar regularity results for other variants of the normalized $p-$parabolic operators.  

The paper is organized as follows. In Section \ref{n}, we introduce various notations, notions and gather some preliminary results that are relevant to the present work and then state our main results. In Section \ref{mn}, we prove our main results. 

\section{Notations, Preliminaries and  statement of the main results}\label{n}
A generic point in space time $\R^n \times \R$ will be denoted by $(x,t), (y,s)$ etc. We denote by $B_r(x)$, the Euclidean ball of radius $r$ centered at $x$.  When $x=0$, we will denote such a set by $B_r$. By $\partial B_r(x)$, we will denote the boundary of the set  $B_{r}(x)$.  For $(x,t)\in\R^{n+1}$, we let
\[
Q_r(x,t)=Q_r+(x,t),\quad \text{where} \quad Q_r=B_r\times (-r^2,0].
\]

The parabolic boundary of $Q_r(x,t)$ will be denoted by $\partial_p Q_r(x,t)$.  The distance between two points in space time is defined as
\begin{equation}\label{dt1}
|(x, t) - (y,s)| \overset{def}= |x-y| + |t-s|^{1/2}.
\end{equation}

For notational ease $\nabla u$ will refer to the quantity  $\nabla_x u$.  The partial derivative in $t$ will be denoted by $\partial_t u$ and also at times  by $u_t$. The partial derivative $\partial_{x_i} u$  will be denoted by $u_i$.

Given $0< \lambda< \Lambda$ and 
$f \in L^{q}$,   $\mathcal{S}(\lambda, \Lambda, f)$ will denote  the set of all functions $u$ which solves  the following differential inequalities in the $W^{2,q}$ viscosity sense 
 \begin{equation}\label{dif1}
 u_t-\mathcal{P}_{\lambda, \Lambda}^{+} (\nabla^2 u) \leq f \leq  u_t-\mathcal{P}_{\lambda, \Lambda}^{-}  (\nabla^2 u).  
 \end{equation}
 We refer to \cite{CCS} for the precise notion of $W^{2,1,q}$ viscosity solutions. The operators $\mathcal{P}_{\lambda, \Lambda}^{-}$ and $ \mathcal{P}_{\lambda, \Lambda}^{+}$ appearing in \eqref{dif1} are  the minimal and maximal Pucci operators, respectively, defined in the following way
 \begin{equation}\label{max}
 \begin{cases}
 \mathcal{P}_{\lambda, \Lambda}^{-}( M)= \text{inf}_{\{A \in S(n): \lambda \mathbb{I} \leq A \leq \Lambda \mathbb{I}\}} \text{trace}\ (AM), 
 \\
 \mathcal{P}_{\lambda, \Lambda}^{+}( M)= \text{sup}_{\{A \in S(n): \lambda \mathbb{I} \leq A \leq \Lambda \mathbb{I}\}} \text{trace}\ (AM),
 \end{cases}
 \end{equation} 
where $S(n)$ the space of $n \times n$ symmetric matrices.    

By $C^{2,1}_{loc}$, we refer to the class of functions $\phi$ such that $\nabla \phi, \nabla^2 \phi$ and $\phi_t$ exists classically and  are   locally continuous. Likewise, $W^{2,1,q}_{\mbox{\tiny loc}}$ denotes the class of functions $\phi$ such that the distributional  $\nabla \phi, \nabla^2 \phi$ and $\phi_t$ are in $L^q_{\mbox{\tiny loc}}$.

We now turn our attention to the relevant notion of solution to \eqref{m}. For $p \in \R^n -\{0\}$ and $X=[m_{ij}] \in  S(n)$, following \cite{BM},  let
\[
F(q, X)= \bigg(\delta_{ij} + (p-2) \frac{q_i q_j}{|q|^2} \bigg)  m_{ij}.
\]
Then, as in \cite{CIL},  the lower semicontinuous relaxation $F_*$ is defined as follows
\begin{equation}
F_*(q,X)=\begin{cases}  \qquad F(q,X)\quad&\hbox{if }q\not=0,\\
\inf_{a\in{\mathbb R}^n\setminus\{ 0\}}F(a,X)\quad&\hbox{if }q=0,      
\end{cases}\end{equation}
while the upper semicontinuous relaxation $F^{*}$ is defined as 

\begin{equation}\label{up}
F^{*}(q,X)=\begin{cases} \qquad F(q,X)\quad&\hbox{if }q\not=0,\\
\sup_{a\in{\mathbb R}^n\setminus\{ 0\}}F(a,X)\quad&\hbox{if }q=0.     
\end{cases}
\end{equation}
\begin{defn}\label{defvs}
We say that $u$ is a $W^{2,1,q}$ viscosity sub-solution of \eqref{m} in a domain $Q=\Omega\times (0,T]$ in space-time,  if given  $\phi \in W^{2,1,q}$ such that $u-\phi$ has a local maximum at  $(x_0, t_0) \in Q$, then one has
\begin{equation}\label{test}
\limsup_{(x,t) \to (x_0,t_0)}\left( F^{*} ( \nabla \phi(x,t), \nabla^2 \phi(x,t))-\varphi_t - f(x,t)\right) \geq 0.
\end{equation}
\end{defn}
In an analogous way, the notion of viscosity supersolution of \eqref{m} is defined  using $F_*$ instead of $F^{*}$, and where $\limsup$ gets replaced by $\liminf$ in the equation \eqref{test} above.  Finally, we  say that $u$ is a $W^{2,1,q}$ viscosity solution to \eqref{m} if it is both a subsolution and a supersolution.  It is easy to  deduce that if $u$ is a $W^{2,1,q}$ viscosity solution to \eqref{m}, then $u$ belongs to the Pucci class $\mathcal{S}(\lambda, \Lambda, f)$ in the $W^{2,1,q}$ viscosity sense where  
\begin{equation}\label{l}
\lambda=\text{min}(1, p-1)\quad \mbox{and}\quad  \Lambda= \text{max}(1, p-1).
\end{equation}

\medskip

Now in the case when $f=0$, it follows from  Lemma 2.1 in \cite{MPR} (see also Proposition 2.8 in \cite{BG2})  that the following equivalent characterisation of viscosity solutions hold. Such an equivalent characterization will  play a crucial role in our analysis.

\begin{lem}\label{equiv1}
$u$ is a $C^{2,1}$ viscosity solution to 
\[
\Delta_p^N u - u_t =0
\]
in $\Omega_T \overset{def} =\Omega \times (0, T]$ if and only if whenever $(x_0,t_0) \in \Omega_T$ and  $\phi \in C^{2,1}(\overline{\Omega_T})$ is such that:
\begin{enumerate}
\item[(a)] $u(x_0, t_0) = \phi(x_0, t_0)$,
\item[(b)] $ u(x, t) > \phi(x, t)$  for $(x, t) \in \Omega_T$ , $(x, t) \neq (x_0, t_0)$,
\end{enumerate}
then at the point $(x_0, t_0)$, we have:
\begin{enumerate}
\item[(i)] $\phi_t \geq \Delta_p^N \phi$ if $\nabla \phi(x_0, t_0) \neq 0$,
\item[(ii)] $\phi_t(x_0, t_0)\geq 0$ if $\nabla \phi(x_0, t_0)=0$, and $\nabla^2\phi(x_0, t_0)=0.$
\end{enumerate}
Moreover, we require that when testing from above, all the inequalities are reversed.
\end{lem}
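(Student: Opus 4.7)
The plan is to verify both implications separately. The direction ``standard viscosity solution $\Rightarrow$ restricted test'' will be almost immediate from Definition \ref{defvs}. Supposing $u$ is a supersolution and $\phi \in C^{2,1}$ satisfies (a)--(b), so that $\phi$ touches $u$ strictly from below at $(x_0,t_0)$, the viscosity supersolution inequality reads $\phi_t(x_0,t_0) \geq F_*(\nabla\phi, \nabla^2\phi)(x_0,t_0)$. When $\nabla\phi(x_0,t_0) \neq 0$, the envelope $F_*$ coincides with $F$, which in turn equals $\Delta_p^N \phi$, yielding (i). When $\nabla\phi(x_0,t_0) = 0$ and $\nabla^2\phi(x_0,t_0) = 0$, a direct computation gives $F_*(0, 0) = \inf_{a \neq 0} F(a, 0) = 0$, yielding (ii). The subsolution case follows symmetrically using $F^*$ in place of $F_*$.

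For the converse, I expect that the only nontrivial case is when $\phi$ touches $u$ from below at $(x_0,t_0)$ with $\nabla\phi(x_0,t_0) = 0$ but $X := \nabla^2\phi(x_0,t_0) \neq 0$, and the goal is to deduce from (i) and (ii) alone that
\begin{equation*}
\phi_t(x_0,t_0) \geq F_*(0, X) = \inf_{|a|=1}\left[\operatorname{tr}(X) + (p-2)\langle a, X a\rangle\right].
\end{equation*}
The key device will be a linear perturbation: for each unit vector $a \in \R^n$ and each small $\epsilon > 0$, set
\begin{equation*}
\tilde\phi_\epsilon(x,t) = \phi(x,t) - \epsilon \langle a, x - x_0 \rangle.
\end{equation*}
Since $u - \phi > 0$ strictly off $(x_0,t_0)$, for $\epsilon$ sufficiently small the function $u - \tilde\phi_\epsilon$ attains a local minimum at some $(x_\epsilon, t_\epsilon) \to (x_0, t_0)$. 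Provided one can arrange $\nabla\tilde\phi_\epsilon(x_\epsilon, t_\epsilon) = \nabla\phi(x_\epsilon, t_\epsilon) - \epsilon a \neq 0$, condition (i) applies to $\tilde\phi_\epsilon$ at $(x_\epsilon, t_\epsilon)$; letting $\epsilon \to 0^+$ and exploiting continuity of $\nabla^2\phi$ together with the $0$-homogeneity of $F$ in its gradient slot will then produce $\phi_t(x_0,t_0) \geq F(a, X)$, and taking the infimum over unit vectors $a$ yields the desired bound.

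The principal obstacle will be guaranteeing that $\nabla\tilde\phi_\epsilon(x_\epsilon, t_\epsilon)$ is genuinely nonzero so that (i) is actually applicable. Since Taylor's theorem gives $\nabla\phi(x_\epsilon, t_\epsilon) = X(x_\epsilon - x_0) + O(|x_\epsilon - x_0|^2)$, in degenerate configurations this could match $\epsilon a$ exactly. The natural fix will be to add a higher order corrector $+\delta|x - x_0|^4$, which preserves the hypothesis $\nabla^2\phi(x_0,t_0) = X$ in the limit $\delta \to 0$ but pins the doubly perturbed gradient at the contact point to be precisely $-\epsilon a$. Alternatively, I could simply follow the routes taken in Lemma~2.1 of \cite{MPR} and Proposition~2.8 of \cite{BG2}, where the analogous equivalence is worked out for closely related homogeneous normalized $p$-parabolic equations; the arguments transplant directly to the present setting. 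The subsolution case will be handled symmetrically by replacing the infimum with a supremum, reversing the sign of the linear perturbation, and working with $F^*(0,X)$ in place of $F_*(0,X)$.
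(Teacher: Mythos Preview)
The paper does not actually prove Lemma~\ref{equiv1}; it simply invokes Lemma~2.1 of \cite{MPR} and Proposition~2.8 of \cite{BG2} as the source of the result. Since you explicitly identify and cite the very same references at the end of your proposal, your approach is fully aligned with the paper's.

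Your additional sketch goes beyond what the paper does, and the forward direction together with the linear--perturbation strategy for the converse are exactly the right ideas. One detail is not quite right as written: adding $+\delta|x-x_0|^4$ to $\tilde\phi_\epsilon$ does not ``pin the doubly perturbed gradient at the contact point to be precisely $-\epsilon a$.'' Adding a nonnegative quartic to the test function makes $u-\tilde\phi_\epsilon-\delta|x-x_0|^4$ \emph{smaller} away from $x_0$, so the new contact point can drift even more, not less; and a quartic has zero gradient and Hessian at $x_0$, so it cannot by itself force the contact point back to $(x_0,t_0)$. The arguments in \cite{MPR} and \cite{BG2} handle this degenerate case by a somewhat different route (working with radial comparison functions of the form $g(|x-x_0|)+\sigma(t-t_0)$ with $g'(0)=g''(0)=0$, or equivalently exploiting the special structure of $F_*(0,X)$), rather than by trying to freeze the contact point. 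Since you already defer to those references, this does not affect the validity of your proposal.
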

Lemma \ref{equiv1} can be thought of as the parabolic analogue of the following interesting result in \cite{JLM} which states that for $p-$Laplace equation, i.e.
\[
\operatorname{div}(|\nabla u|^{p-2} \nabla u) =0,
\]
 the notion of viscosity solution  is only required to be tested at the points where the gradient of the test function does not vanish. This was crucially used in the proof of the corresponding gradient continuity result  in \cite{BM} for the time independent case. In an entirely analogous way as in \cite{BM}, Lemma \ref{equiv1} plays a critical role in the proof of the corresponding stability result in Lemma \ref{rt} below which is an important ingredient in the "phase separation" argument in the proof of Theorem \ref{main} and Theorem \ref{main1}.  Note that the stability result in \cite{CCS} does not apply in our situation because of the singular dependence of the normalized $p$-Laplacian operator on the gradient variable.

Finally, we mention that the  Lorentz space $L(n+2,1)$  consists of those measurable functions $g:\R^{n+1}\rightarrow\R$ satisfying the condition
\[
\int_{0}^{\infty} |\{(x,t): g(x,t) > s\}|^{\frac{1}{n+2}} ds < \infty.
\]

\begin{rem}\label{mod}
 For a function $f: \R^{n+1} \to \R$, we define the  modified Riesz   potential $\mathbf{P}^f_q(x_0, t_0, r)$ as
\begin{equation}
\label{Riesz}
\mathbf{P}^f_q(x_0,t_0,r)\overset{def}= \int^r_0\left(\aver{Q_{\rho}(x_0,t_0)}|f|^qdxdt\right)^{\frac{1}{q}}d\rho,
\end{equation} 
where  $Q_{\rho}(x_0, t_0) = B_{\rho}(x_0) \times (t_0-\rho^2, t_0]$ denotes the standard parabolic cylinder with vertex at $(x_0,t_0)$ and width $\rho > 0$.

From Lemma 2.3 in \cite{KM1} we get that for every $(x_0,t_0)\in \R^{n+1}$
\begin{align}\label{int1}
 \mathbf{P}^f_q(x_0,t_0,r)\leq  c_1 \int_{0}^{\omega_n r^{n+2}} \left[ (|f|^q)^{**}(\rho) \rho^{\frac{q}{n+2}} \right]^{\frac{1}{q}} \ \frac{d\rho}{\rho},
\end{align}
where the constant $c$ depends only on $n$, $\omega_n$ denotes the measure of the unit-ball, and  $(|f|^q)^{**}$ is defined as 
\[
(|f|^q)^{**}(\rho)= \frac{1}{\rho} \int_{0}^{\rho}  (|f|^q)^{*}(s) ds,
\]
with $(|f|^q)^{*}(\rho)$ being the radial non-increasing rearrangement of $|f|^q$. 

 Now, when $f \in L(n+2,1)$, we have from an equivalent characterization of Lorentz spaces that
 \begin{equation}\label{t}
   \int_{0}^{\infty} \left[ f^{**}(\rho) \rho^{\frac{q}{n+2}} \right]^{\frac{1}{q}} \ \frac{d\rho}{\rho} < \infty, \quad \text{for $q < n+2$}.
   \end{equation}
   In our subsequent analysis, we will consider the case when $q=n+1$ which ensures the validity of \eqref{t}. It thus follows that
   \begin{equation}\label{conv1}
    \mathbf{P}^f_q(x_0,t_0,r) \to 0\ \text{as $r \to 0$}
    \end{equation}
    when $f \in L(n+2, 1)$ and $q< n+2$.  
\end{rem}

We now fix a universal parameter which plays a crucial role in our compactness arguments.  Let $\beta>0$ be the optimal H\"older exponent such that any arbitrary $C^{2,1}$ viscosity solution $u$   of
\[
u_t=\Delta^N_p u\quad \text{is in $H^{1, \beta}_{loc}$}.
\]
The fact  that $\beta>0$ follows from the gradient H\"older continuity result established in  \cite{JS}. We then   fix some  $\alpha>0$ such that 
\begin{equation}\label{universal}
\alpha < \beta.
\end{equation} 
We refer to \cite[Chapter 4]{Li} for the precise notion of parabolic H\"older spaces $H^{k, \alpha}$. 

In our analysis, we will also need the following notion.
 \begin{defn}\label{alphadec}
 Given a modulus of continuity $\omega:[0, \infty) \to [0, \infty)$ and $\eta \in (0,1]$, we say that $\omega$ is $\eta$-decreasing if
 \[
 \frac{\omega(t_1)}{t_1^{\eta}} \geq \frac{\omega(t_2)}{t_2^{\eta}}, \text{for all $t_1 \leq t_2$.}\]
 \end{defn}

\subsection{Statement of the main results}
We now state our first  main result. This result corresponds to the  regularity  estimate  in the borderline  case, i.e.,  gradient continuity estimates with dependence on the  $L(n+2,1)$ norm of $f$. 

\begin{thm}\label{main}
For a given $p>1$, let $u$  be a $W^{2,1,n+1}$ viscosity solution of \eqref{m} in $Q_1$  where $f \in L(n+2,1)$. Then $\nabla u$ is continuous inside of $Q_1$. Moreover, the following  borderline estimates hold
\begin{equation}\label{bm}
\begin{cases}
|\nabla u(x_0,t_0)| \leq  C( \mathbf{P}^{f}_{n+1} (x_0, t_0, 1/2) + \|u\|_{L^{\infty}(Q_1)})\ \text{ \emph{for} $(x_0, t_0) \in Q_{1/2}$},
\\
|\nabla  u(x_1,t_1) - \nabla u(x_2,t_2) | \\ \leq C(n, p) ( \| u\|_{L^{\infty}(Q_{3/4})} |(x_1, t_1) - (x_2, t_2)|^{\alpha/4} +  \sup_{(x,t) \in Q_1}  \mathbf{P}^{f}_{n+1} (x,t,  4 |(x_1, t_1)-(x_2, t_2)|^{1/4})),\\
\operatorname{sup}_{\{(x, t_1), (x, t_2) \in Q_{1/2}: t_1 \neq t_2\}} \frac{|u(x, t_1) - u(x, t_2)|}{|t_1-t_2|^{1/2}} \leq C(\operatorname{sup}_{\{(x,t) \in Q_1\}} \mathbf{P}^{f}_{n+1} (x,t, 1/2) + \|u\|_{L^{\infty}(Q_1)}).\end{cases}
\end{equation}
  whenever  $(x_1,t_1), (x_2, t_2)  \in Q_{1/2}$,  and  where  $\alpha$ is as in \eqref{universal}.  \end{thm}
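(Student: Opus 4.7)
The plan is to run a Caffarelli-type compactness-iteration scheme at every interior point $(x_0,t_0)\in Q_{1/2}$, in the spirit of \cite{W, BM}, and then sum the dyadic contributions against the Riesz potential $\mathbf{P}^f_{n+1}$. Fix a small universal ratio $r\in(0,1)$ (to be chosen from the degenerate step below). I would produce inductively a sequence of affine functions $\ell_k(x)=a_k+\langle p_k,x-x_0\rangle$ and error levels $\varepsilon_k$ so that
\[
\|u-\ell_k\|_{L^\infty(Q_{r^k}(x_0,t_0))}\leq r^k\varepsilon_k,\qquad |p_{k+1}-p_k|\leq C\varepsilon_k,
\]
with $\varepsilon_k$ controlled by a geometric damping of $\|u\|_{L^\infty(Q_1)}$ plus a telescoping sum of the scale-$r^j$ averaged $L^{n+1}$-norms of $f$. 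The induction step rests on the approximation Lemma \ref{app1}: after subtracting $\ell_k$ and parabolically rescaling by $r^k$, the resulting solution is $L^\infty$-close to a solution $h$ of a homogeneous equation of the type \eqref{it} with $f\equiv 0$, for which good a priori $C^{1,\alpha}$ estimates are available.

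The core of the argument is a dichotomy between a non-degenerate and a degenerate phase. In the non-degenerate phase $|p_k|\geq M\varepsilon_k$ (for a universal $M$), the rescaled function $v_k(y,s)=r^{-k}(u(x_0+r^ky,t_0+r^{2k}s)-\ell_k(r^ky))$ satisfies a perturbation of \eqref{it} whose coefficient matrix $\delta_{ij}+(p-2)q_iq_j/|q|^2$ is evaluated at $q=\nabla v_k+p_k$ with $|p_k|$ large relative to $\nabla v_k$; its eigenvalues stay trapped in $[\lambda,\Lambda]$ from \eqref{l}, so Krylov-Safonov and Schauder-type estimates combined with the approximation lemma produce the next affine approximation. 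In the degenerate phase $|p_k|<M\varepsilon_k$, the linear part is negligible and one falls back on the pure homogeneous equation \eqref{nplap}, whose solutions enjoy the gradient H\"older regularity of \cite{JS} with exponent $\beta>\alpha$; this is where \eqref{universal} is used, and the gain $r^\beta\ll r^\alpha$ yields the required improvement of flatness. To pass to the limit in either phase one invokes the stability result Lemma \ref{rt}, whose proof is parabolic-specific and leans on the alternate characterization Lemma \ref{equiv1}, the classical stability of \cite{CCS} being unavailable because of the singular dependence of $F^*$ in \eqref{up} on the gradient.

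With the iteration in place, I would deduce the three estimates in \eqref{bm} as follows. The Cauchy property of $\{p_k\}$ together with the convergence $\mathbf{P}^f_{n+1}(x_0,t_0,\rho)\to 0$ as $\rho\to 0$ (from Remark \ref{mod}, for $f\in L(n+2,1)$) shows that $\nabla u(x_0,t_0)=\lim p_k$ exists and yields the first bound. For the second, given $(x_1,t_1),(x_2,t_2)$ at parabolic distance $d=|(x_1,t_1)-(x_2,t_2)|$, one runs the iteration at both points and compares the two sequences $\{\ell_k^{(i)}\}_{i=1,2}$ in a common parabolic cylinder of radius comparable to $d^{1/4}$, the exponent $1/4$ coming from the interplay between \eqref{dt1} and the geometric iteration at the two base points. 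Finally, the $1/2$-H\"older regularity in time follows from the gradient bound combined with the Pucci membership \eqref{dif1} and a standard barrier argument in the time direction.

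The principal obstacle is the approximation Lemma \ref{app1} itself, and behind it the stability Lemma \ref{rt}: the classical viscosity stability theorem does not apply to $F^*$ from \eqref{up}, so the argument must use Lemma \ref{equiv1} to restrict test functions to those with nonvanishing gradient at the contact point, while simultaneously invoking interior Krylov-Safonov H\"older estimates for the Pucci class \eqref{dif1} to secure uniform $C^0$-compactness of approximating sequences. A secondary technical point is verifying that the constants appearing in the non-degenerate Schauder step are genuinely independent of $|p_k|$; this is built into the scale-invariance of the normalized operator \eqref{pl}, whose coefficient matrix depends only on the direction of the gradient, not its magnitude.
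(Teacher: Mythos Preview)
Your plan matches the paper's strategy: an affine-approximation iteration with a degenerate/non-degenerate phase split, compactness against a constant-coefficient limit in the non-degenerate phase (Lemma~\ref{app1}) and against the homogeneous equation \eqref{nplap} in the degenerate phase (this is precisely the content of Lemma~\ref{rt}), with the errors summed against $\mathbf{P}^f_{n+1}$. Two organizational points are worth flagging. First, the paper does not run a step-by-step dichotomy as you describe; instead it formulates a small-slope hypothesis $[H]$ at each scale $\eta^i$, lets $k$ be the \emph{first} scale where $[H]$ fails, and at that moment switches permanently to the non-degenerate regime via Lemma~\ref{ap2}, whose proof packages its own complete sub-iteration and verifies (see \eqref{ndeg}) that once the rescaled slope satisfies $|A_0|\geq 2$ it stays above $1$ at every subsequent scale---so the phases cannot alternate and no bookkeeping for oscillation between phases is needed. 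Second, Lemma~\ref{rt} is not a stability lemma but the degenerate-phase improvement-of-flatness lemma itself; the stability arguments you correctly identify as relying on Lemma~\ref{equiv1} (because the framework of \cite{CCS} does not cover the singular gradient dependence of $F^*$) are carried out \emph{inside} the proofs of Lemma~\ref{app1} and Lemma~\ref{rt} separately, not invoked as a standalone result. The concrete realization of your ``telescoping sum'' is the discrete-convolution modulus $\omega(\eta^k)=\varepsilon_0^{-1}\sum_{i=0}^k \eta^{i\alpha}\omega_1(\eta^{k-i})$ in \eqref{om1}, engineered so that the $\alpha$-decreasing property \eqref{dec} holds and the rescaled right-hand side stays below the threshold at every step (computations \eqref{com1} and \eqref{d1}).
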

  
  Note that the second estimate in \eqref{bm} above provides a modulus of continuity for $\nabla u$ in view of \eqref{conv1}.
  
  In the case  $f \in L^{m}(\R^{n+1})$ with   $m>n+2$, we  obtain the following regularity result that improves Theorem 1.2 in \cite{APR}.
  
 \begin{thm}\label{main1}
 For $p>1$ and $m>n+2$, let $u$ be a $W^{2,1,m}$ viscosity solution of \eqref{m} in $Q_1$, where $f \in L^{m}$. Then, $\nabla u \in C^{\alpha_0}(\overline{Q_{1/2}})$ for some $\alpha_0=\alpha_0(n, p, m)$. Moreover, we have that the  following estimates hold
 \begin{equation}
 \begin{cases}
 \|\nabla u\|_{H^{\alpha_0}(Q_{1/2})} \leq C(n, p, \|f\|_{L^m}, \|u\|_{L^{\infty}(Q_1)}),
 \\
 \operatorname{sup}_{\{(x, t_1), (x, t_2) \in Q_{1/2}: t_1 \neq t_2\}} \frac{|u(x, t_1) - u(x, t_2)|}{|t_1-t_2|^{\frac{1+\alpha_0}{2}}}  \leq C(n, p, \|f\|_{L^m}, \|u\|_{L^{\infty}(Q_1)}).\end{cases}\end{equation} 
 \end{thm}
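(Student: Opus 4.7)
The plan is to deduce Theorem \ref{main1} from Theorem \ref{main} by exploiting that $f\in L^m$ with $m>n+2$ forces the modified Riesz potential to decay at a polynomial rate, and then to upgrade the time regularity by revisiting the iterative scheme behind Theorem \ref{main}. As a first step, fix $(x_0,t_0)\in Q_{1/2}$ and $\rho\in(0,1/2]$; since $|Q_\rho|\sim\rho^{n+2}$, Hölder's inequality gives
\[
\left(\aver{Q_\rho(x_0,t_0)}|f|^{n+1}\,dx\,dt\right)^{1/(n+1)}\leq \|f\|_{L^m(Q_1)}|Q_\rho|^{-1/m}\leq C(n)\|f\|_{L^m(Q_1)}\rho^{-(n+2)/m},
\]
so setting $\sigma:=1-(n+2)/m>0$ and integrating in $\rho$,
\[
\mathbf{P}^f_{n+1}(x_0,t_0,r)\leq C(n,m)\|f\|_{L^m(Q_1)}\,r^{\sigma}.
\]
In particular $f\in L(n+2,1)$ on $Q_1$ and the hypotheses of Theorem \ref{main} are met.

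For the spatial Hölder estimate on $\nabla u$, I would combine the $L^\infty$ bound in the first line of \eqref{bm} with the modulus of continuity in the second line of \eqref{bm}, evaluated at $r=4|(x_1,t_1)-(x_2,t_2)|^{1/4}$, and insert the decay just established. This yields for $(x_1,t_1),(x_2,t_2)\in Q_{1/2}$
\[
|\nabla u(x_1,t_1)-\nabla u(x_2,t_2)|\leq C(n,p,\|u\|_{L^\infty(Q_1)},\|f\|_{L^m})\,|(x_1,t_1)-(x_2,t_2)|^{\alpha_0},
\]
with $\alpha_0:=\min(\alpha,\sigma)/4$ and $\alpha$ as in \eqref{universal}. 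This gives the first estimate of Theorem \ref{main1}.

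The time Hölder estimate on $u$ needs more, since the third line of \eqref{bm} only delivers the Pucci-class time regularity $C^{1/2}$. I would revisit the iterative scheme behind Theorem \ref{main}: at a base point $(x_0,t_0)\in Q_{1/2}$ and at each dyadic scale $r_k=2^{-k}$, approximate $u$ by a parabolic affine function $L_k(x,t)=a_k+b_k\cdot(x-x_0)+c_k(t-t_0)$ with
\[
\sup_{Q_{r_k}(x_0,t_0)}|u-L_k|\leq C r_k^{1+\alpha_0}.
\]
The geometric decay of $\mathbf{P}^f_{n+1}$ from the first step closes the iteration in this sharper form; evaluating at $x=x_0$ and choosing $r_k\sim|t-t_0|^{1/2}$ then gives $|u(x_0,t)-u(x_0,t_0)|\leq C|t-t_0|^{(1+\alpha_0)/2}$.

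The main obstacle is this last step: establishing parabolic-affine versions of the approximation and stability lemmas (analogues of Lemma \ref{app1} and Lemma \ref{rt}) that simultaneously transmit the spatial slope $b_k$ and the time slope $c_k$ through the iteration. This rests on the full parabolic $C^{1,\beta}$ regularity of the homogeneous equation from \cite{JS}, on the stability arguments based on Lemma \ref{equiv1} that accommodate the singular dependence of $\Delta_p^N$ on the gradient, and on the phase separation between the degenerate and non-degenerate regimes already used in the proof of Theorem \ref{main}.
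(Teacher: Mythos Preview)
Your route to the first estimate differs from the paper's but is valid: plugging the decay $\mathbf{P}^f_{n+1}(x_0,t_0,r)\leq C\|f\|_{L^m}r^\sigma$ into the second line of \eqref{bm} does yield gradient H\"older continuity with exponent $\min(\alpha,\sigma)/4$. The paper instead reruns the whole iteration directly under $L^m$ smallness hypotheses, proving parallel versions of Lemmas \ref{app1}--\ref{rt} (namely Lemmas \ref{ap01}--\ref{rt1}) with $\|f\|_{L^m}$ in place of the Riesz potential, and obtaining the pointwise estimate $|u(x,t)-\tilde L(x)|\leq C|X|^{1+\alpha_0}$ with the sharper $\alpha_0<\min(\alpha,\sigma)$. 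Your shortcut buys brevity at the cost of a factor of $4$ in the exponent, which is acceptable since the theorem only asks for \emph{some} $\alpha_0$.

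For the time estimate, however, you take an unnecessary detour and leave the argument incomplete. You correctly observe that the third line of \eqref{bm} only delivers $C^{1/2}$ in time, but your proposed fix---carrying a time slope $c_k$ through parabolic-affine approximants $a_k+b_k\cdot(x-x_0)+c_k(t-t_0)$ and rebuilding the approximation lemmas to transmit $c_k$---is not needed, and you yourself flag those lemmas as the main unresolved obstacle. The point you are missing is that a purely \emph{spatial} affine approximation of order $1+\alpha_0$ in the \emph{parabolic} metric already encodes the time regularity: if $|u(x,t)-\tilde L(x)|\leq C|X|^{1+\alpha_0}$ with $|X|=\max(|x-x_0|,|t-t_0|^{1/2})$ and $\tilde L(x_0)=u(x_0,t_0)$, then setting $x=x_0$ gives $|u(x_0,t)-u(x_0,t_0)|\leq C|t-t_0|^{(1+\alpha_0)/2}$ directly. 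This is precisely the paper's route. In fact you need not even reopen the iteration: the pointwise estimate \eqref{des} established \emph{inside} the proof of Theorem \ref{main} (as opposed to the cruder consequence recorded in \eqref{bm}) already has this form, and inserting your potential bound into $K_0$ yields $|u(x,t)-\tilde L(x)|\leq C|X|^{1+\min(\alpha,\sigma)/4}$, which settles both estimates of Theorem \ref{main1} at once.
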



\section{Proof of the main results}\label{mn}

\subsection{Proof of Theorem \ref{main}}

Before proceeding further, we would like to alert the reader that in what follows,  the number $q$ in Lemma \ref{app1}-Lemma \ref{rt}   equals $n+1$.

We now state our first relevant approximation lemma which plays a very crucial role in the separation of phases. This is the parabolic version of Lemma 3.1 in \cite{BM} and corresponds to the non-degenerate phase in our final iteration argument in the proof of Theorem \ref{main}. 

\begin{lem}\label{app1}
Let $u$ be a  $W^{2, 1, q}$ viscosity solution of
\begin{equation}\label{at}
\bigg(\delta_{ij} + (p-2) \frac{(\delta u_i+A_i)(\delta u_j +A_j)}{ |\delta \nabla u+A|^2} \bigg) u_{ij}-u_t=f\quad \text{in $Q_1$},
\end{equation}
 with $|u| \leq 1$ and $|A| \geq 1$.  Given $\tau>0$, there exists  $\delta_0=\delta_0(\tau)>0$  such that if 
\[
\delta, \left(\frac{1}{|Q_{3/4}|} \int_{Q_{3/4}}  |f|^q \right)^{1/q} \leq \delta_0,
\]
then $\|w-u\|_{L^{\infty}(Q_{1/2})} \leq \tau$, for some  $w \in C^{2,1}(\overline{Q_{1/2}})$ with universal $C^{2,1}$ bounds depending only on $n, p$ and  independent of $|A|$.

\end{lem}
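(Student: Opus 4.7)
\medskip

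\noindent\textbf{Proof proposal for Lemma \ref{app1}.} My plan is a standard compactness/contradiction argument in the spirit of Caffarelli, tailored here to exploit the fact that the hypothesis $|A| \geq 1$ forces the equation to be \emph{non-degenerate}: since $|A| \ge 1$ and $\delta$ is small, the gradient variable $\delta\nabla u + A$ sits well away from the origin, so the singularity of the normalized $p$-Laplacian plays no role and one only needs classical stability. Assume the conclusion fails; then there exist $\tau_0>0$ and sequences $\delta_k \to 0$, $f_k$ with $\|f_k\|_{L^q(Q_{3/4})}\to 0$, vectors $A_k$ with $|A_k|\ge 1$, and solutions $u_k$ with $|u_k|\le 1$ of the corresponding equation \eqref{at}, such that no $w \in C^{2,1}(\overline{Q_{1/2}})$ with a universal $C^{2,1}$ bound lies within $\tau_0$ of $u_k$ in $L^\infty(Q_{1/2})$.

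\medskip

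\noindent The first step is to extract a subsequential limit. Any such $u_k$ belongs to the Pucci class $\mathcal{S}(\lambda,\Lambda,f_k)$ with $\lambda=\min(1,p-1)$, $\Lambda=\max(1,p-1)$, so the Krylov--Safonov H\"older estimate applied to parabolic Pucci extremal inequalities (see \cite{CCS}) and the smallness of $\|f_k\|_{L^q}$ give a uniform H\"older estimate for $\{u_k\}$ on $\overline{Q_{3/4}}$. Passing to a subsequence, $u_k \to u_\infty$ uniformly on $\overline{Q_{3/4}}$ with $|u_\infty|\le 1$, and simultaneously $A_k/|A_k|\to e$ for some unit vector $e$.

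\medskip

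\noindent The next step is to identify the limiting equation. For a smooth test function $\phi$ and any $(x,t)$ in a fixed compact set, the coefficients
\[
a^k_{ij}(x,t) = \delta_{ij} + (p-2)\,\frac{(\delta_k\phi_i+A_{k,i})(\delta_k\phi_j+A_{k,j})}{|\delta_k\nabla\phi + A_k|^2}
\]
are well-defined because $|\delta_k\nabla\phi + A_k| \ge |A_k|-\delta_k\|\nabla\phi\|_\infty \ge 1/2$ for $k$ large. Moreover, writing $\delta_k\nabla\phi + A_k = A_k + O(\delta_k)$ with $|A_k|\ge 1$, one checks that $a^k_{ij}\to \delta_{ij}+(p-2)e_ie_j$ uniformly on that compact set. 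Combining this uniform convergence of coefficients with the uniform convergence $u_k\to u_\infty$ and the $L^q$-smallness of $f_k$, the stability theory for $L^q$-viscosity solutions (here in the easy non-singular regime, so \cite{CCS} applies directly) yields that $u_\infty$ is a viscosity solution of the constant-coefficient linear parabolic equation
\[
\sum_{i,j}\bigl(\delta_{ij} + (p-2)e_ie_j\bigr)(u_\infty)_{ij} - (u_\infty)_t = 0 \qquad \text{in } Q_{3/4}.
\]

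\medskip

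\noindent This equation is uniformly parabolic with constant coefficients, so by standard linear parabolic Schauder theory $u_\infty \in C^\infty(Q_{3/4})$, and $\|u_\infty\|_{C^{2,1}(\overline{Q_{1/2}})} \le C(n,p)\|u_\infty\|_{L^\infty(Q_{3/4})} \le C(n,p)$, a bound independent of the sequence and of $|A_k|$. Choosing $w := u_\infty$ as the approximant, uniform convergence gives $\|u_k - w\|_{L^\infty(Q_{1/2})} \to 0$, contradicting the assumption for $k$ large. The main subtle point to verify is the uniform-coefficient convergence above in the $W^{2,1,q}$-viscosity framework; but since $|A|\ge 1$ keeps us uniformly away from the singular set of $F$ for small $\delta$, this reduces to the classical stability result of \cite{CCS}, in contrast to the degenerate situation of Lemma \ref{rt} where the alternate characterization Lemma \ref{equiv1} is essential.
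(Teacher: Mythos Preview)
Your overall strategy---compactness/contradiction, uniform H\"older bounds via the Pucci class and Krylov--Safonov, extraction of a uniform limit $u_\infty$ and a limiting unit direction $e$, identification of the limiting constant-coefficient equation, and the contradiction via interior Schauder estimates---is exactly the paper's. The gap is in the stability step, which you dismiss too quickly.

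You argue that since $|A_k|\ge 1$ and $\delta_k\to 0$, for any \emph{smooth} test function $\phi$ the vector $\delta_k\nabla\phi+A_k$ stays bounded away from the origin, so the coefficients $a^k_{ij}$ converge uniformly and ``the classical stability result of \cite{CCS} applies directly.'' But the $u_k$ are $W^{2,1,q}$-viscosity solutions with $f_k\in L^q$ only, so the passage to the limit must go through a perturbed-test-function argument: one adds to $\phi$ a corrector $\phi_k\in W^{2,1,q}$ solving an extremal equation with right-hand side $f_k$. Now $\nabla\phi_k$ need not be bounded, and there is no reason for $\tilde\delta_k\nabla(\phi+\phi_k)+\tilde A_k$ to stay away from $0$; in particular the operator $F_k(q,X)$ does not satisfy the structure condition (SC) of \cite{CCS} uniformly in $q$, which is why Theorem~6.1 there does not apply off the shelf (the paper says this explicitly). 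The paper closes this gap by deriving the pointwise inequality
\[
F_k^{*}\bigl(\nabla(\phi+\phi_k),\nabla^{2}(\phi+\phi_k)\bigr)-(\phi+\phi_k)_t \le \mathcal{P}^{+}_{\lambda,\Lambda}(\nabla^2\phi_k)-(\phi_k)_t+C_0\bigl(|\tilde A_k-A_0|+\tilde\delta_k|\nabla\phi_k|+\tilde\delta_k|\nabla\phi|\bigr)-\varepsilon
\]
via a case split according to whether $|A_0-(\tilde A_k+\tilde\delta_k\nabla(\phi+\phi_k))|<1/2$ (Lipschitz estimate on the coefficient map applies) or $\ge 1/2$ (the crude ellipticity bound is absorbed by the large term $\tilde\delta_k|\nabla\phi_k|$). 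One then chooses $\phi_k$ to solve the Pucci extremal equation with the extra first-order term $C_0\tilde\delta_k|\nabla\phi_k|$, and concludes. You should supply this argument; your claim that non-degeneracy alone reduces matters to \cite{CCS} is the missing idea.
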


\begin{proof}
We argue by contradiction. If not, then there exists $\tau_0>0$ and   a sequence of pairs  $\{u_k, f_k \}$ that solves \eqref{at} corresponding to $\{\delta_k, A_k\}$ with  $\delta_k \to 0, f_k \to 0\ \text{in $L^q(B_{3/4})$}$ as $k \to \infty$ and  such  that $u_k's$ are not $\tau_0$ close to any such $w$.  We note that the equation satisfied by $u_k$ can be rewritten as 
\begin{equation}\label{at1}
\bigg(\delta_{ij} +(p-2) \frac{(\tilde \delta_k (u_k)_i + (\tilde A_k)_i ) (\tilde \delta_k (u_k)_j + (\tilde A_k)_j)}{ |\tilde \delta_k \nabla u_k + \tilde A_k|^2}  \bigg) (u_k)_{ij}-(u_k)_t =f_k,
\end{equation}
where $\tilde \delta_k = \frac{\delta_k}{|A_k|}$ and $\tilde A_k= \frac{A_k}{|A_k|}$. Since $|A_k| \geq 1$, we have $\tilde \delta_k \to 0$ as $k \to \infty$.

Now it follows that  for each $k$, $u_k \in  \mathcal{S}(\lambda, \Lambda, f_k)$ with $\lambda, \Lambda$ as in \eqref{l} and moreover we have that $\|f_k\|_{L^{q}(Q_{3/4})}$ is uniformly bounded independent of $k$.  Thus from the Krylov-Safonov-type H\"older estimates as in \cite[Lemma 5.1]{CCS} (see also \cite{W1}), we have that  $u_k$'s are uniformly H\"older continuous in $\overline{Q_{3/5}}$. Therefore, upto a subsequence,  by Arzela-Ascoli we may assume that $u_k \to u_0$ uniformly on $Q_{3/5}$ and, moreover, we can also assume that  $\tilde A_k  \to A_0$ (by possibly passing to   another subsequence)  such that  $|A_0|=1$. 

We now  claim that $u_0$ solves 
\begin{equation}\label{lim}
\bigg(\delta_{ij} + (p-2) (A_0)_i (A_0)_j \bigg) (u_0)_{ij} -(u_0)_t=0.
\end{equation}
By standard theory, it suffices to check that $u_0$ is a $C^{2}-$viscosity solution to the above limiting equation.  We note that the stability result in Theorem 6.1 in \cite{CCS} cannot be directly applied here,  because of the singular dependence of the operator in the \say{gradient} variable. Similar to the elliptic case as in \cite{BM}, we thus argue as follows.

 Let $\phi$ be a $C^{2,1}$ function such that the graph of  $\phi$ strictly touches the graph of $u_0$ from above  at $(x_0, t_0) \in Q_{1/2}$. We show that  at $(x_0,t_0)$
\begin{equation}\label{et}
\bigg(\delta_{ij} + (p-2) (A_0)_i (A_0)_j \bigg) \phi_{ij}-\phi_t \geq 0.
\end{equation}
Suppose that  is not the case.  Then, there exists $\ve, \eta, r>0$  small enough such that
\begin{equation}\label{con2}
\begin{cases}
\bigg(\delta_{ij} + (p-2) (A_0)_i (A_0)_j \bigg) \phi_{ij}-\phi_t \leq - \ve\ \text{in $Q_r(x_0,t_0)$},
\\
\phi - u_0 \geq \eta\ \text{on $\partial_p Q_r(x_0,t_0)$}.
\end{cases}
\end{equation}
We now show that for every $k$, there exists a  perturbed test function $\phi + \phi_k$, with $\phi_k \in W^{2,1,q}$, such  that
\begin{equation}\label{con5}
F_k^{*} ( \nabla (\phi +\phi_k), \nabla^{2} (\phi + \phi_k))-(\phi + \phi_k)_t \leq f_k - \ve\ \text{in $Q_r(x_0,t_0)$},
\end{equation}
where $F_k^{*}$ is the upper semicontinuous relaxation of the operator  in \eqref{at1}. Moreover, we can also ensure that  $ (\phi+ \phi_k) - u_k$ has a minimum  in $Q_{r}(x_0, t_0) \setminus \partial_p Q_r(x_0, t_0)$ for large enough $k's$. This would  then contradict the viscosity formulation for $u_k$ for such $k's$ and hence \eqref{et} would follow. 

Therefore, under the assumption that \eqref{con2} holds, we now show the validity of \eqref{con5}.  We first observe that from \eqref{con2}, the following differential inequality holds
\begin{eqnarray}
\label{Fkineq}
F_k^{*} ( \nabla (\phi +\phi_k), \nabla^{2} (\phi + \phi_k)) - (\phi+\phi_k)_t & \leq & \mathcal{P}_{\lambda, \Lambda}^{+} (\nabla^2 \phi_k) -  (\phi_k)_t +C_0 |\tilde A_k - A_0|\\  
&&+ C_0 \tilde \delta_k |\nabla \phi_k| +  C_0 \tilde \delta_k |\nabla \phi| - \ve,
\notag
\end{eqnarray}
where $C_0=C_0(\|\nabla^2 \phi\|, p, n)$ and $\lambda, \Lambda$ are as in \eqref{l}.   Inequality \eqref{Fkineq} will follow by  adding and subtracting $\bigg(\delta_{ij} + (p-2) (A_0)_i (A_0)_j \bigg) \phi_{ij}$, by using \eqref{con2}, and then by splitting the considerations depending on whether $\Big|A_0 - (\tilde A_k + \tilde \delta_k (\nabla \phi + \nabla \phi_k))\Big| < 1/2$ or $> 1/2$. This   is similar to the argument as in  $(3.9)-(3.11)$ in the proof of  Lemma 3.1 in \cite{BM}. We nevertheless provide the details for the sake of completeness. 

\medskip

\emph{Case 1:} When $|A_0 - (\tilde A_k + \tilde \delta_k (\nabla \phi + \nabla \phi_k))| < 1/2$.

\medskip

In this case, we first note   by  triangle inequality that $|(\tilde A_k + \tilde \delta_k (\nabla \phi + \nabla \phi_k)) |>\frac{1}{2}$  since $|A_0| = 1$. Now  we have that  the function 
 \[
 a \to (p-2) \frac{ a_i a_j}{|a|^2},\ \text{for $|a|> 1/2$,}
 \]
 is  Lipschitz continuous, therefore it follows that
 \begin{align}\label{bt}
&\bigg(\delta_{ij} + (p-2) \frac{\tilde A_k + \tilde \delta_k ( \nabla \phi +\nabla \phi_k)}{| \tilde A_k +  \tilde \delta_k( \nabla \phi + \nabla \phi_k)|^2} \bigg) (\phi + \phi_k)_{ij}-  \bigg(\delta_{ij} + (p-2) (A_0)_i (A_0)_j \bigg) \phi_{ij}
\\
& \leq \mathcal{P}_{\lambda, \Lambda}^{+} (\nabla^2 \phi_k)  + C \|\nabla^2 \phi\|_{L^{\infty}} (  |\tilde A_k - A_0| +  \tilde \delta_k |\nabla \phi_k| +  \tilde \delta_k |\nabla \phi|).\notag
\end{align}

Thus  by  adding and subtracting $\bigg(\delta_{ij} + (p-2) (A_0)_i (A_0)_j \bigg) \phi_{ij}$ to $F_{k}^*( \nabla (\phi + \phi_k), \nabla^2(\phi + \phi_k)) - (\phi+\phi_k)_t$  and by using \eqref{con2} and \eqref{bt}, we observe that \eqref{Fkineq} follows in this case. 

\medskip

\emph{Case 2:} When $|A_0 - (\tilde A_k + \tilde \delta_k (\nabla \phi + \nabla \phi_k))| >1/2$.

\medskip

In this case, we note that since 
\begin{align}
& F_{k}^*( \nabla (\phi + \phi_k), \nabla^2(\phi + \phi_k))-  \bigg(\delta_{ij} + (p-2) (A_0)_i (A_0)_j \bigg) \phi_{ij} 
\\
& \leq \mathcal{P}_{\lambda, \Lambda}^{+} (\nabla^2 \phi_k) + C \|\nabla^2 \phi\|_{L^{\infty}} 
\notag
\\
& \leq  \mathcal{P}_{\lambda, \Lambda}^{+} (\nabla^2 \phi_k) + C\|\nabla^2 \phi\|_{L^{\infty}}|A_0 - (\tilde A_k + \tilde \delta_k (\nabla \phi + \nabla \phi_k))|\notag\\ & \text{ (using $|A_0 - (\tilde A_k + \tilde \delta_k (\nabla \phi + \nabla \phi_k))| >1/2$)}\notag
\\
& \leq  \mathcal{P}_{\lambda, \Lambda}^{+} (\nabla^2 \phi_k) + C\|\nabla^2 \phi\|_{L^{\infty}} ( |A_0 - \tilde A_k| + \tilde \delta_k |\nabla \phi| + \tilde \delta_k |\nabla \phi_k|),\ \text{ (by triangle inequality)}
\notag
 \end{align} 
 we again  obtain by  adding and subtracting $\bigg(\delta_{ij} + (p-2) (A_0)_i (A_0)_j \bigg) \phi_{ij}$ to $F_{k}^*( \nabla (\phi + \phi_k), \nabla^2(\phi + \phi_k)) - (\phi + \phi_k)_t$  and by using  \eqref{con2} that \eqref{Fkineq} follows in this case as well. 

\medskip

Subsequently, we let $\phi_k$ be  a strong solution to the following boundary value problem
\begin{equation*}
\begin{cases}
\mathcal{P}_{\lambda, \Lambda}^{+} (\nabla^2 \phi_k) + C_0 |\tilde A_k - A_0|  + C_0 \tilde \delta_k |\nabla \phi_k| +  C_0 \tilde \delta_k |\nabla \phi| -(\phi_k)_t= f_k\ \text{in $Q_r(x_0,t_0)$},
\\
\phi_k=0\ \text{on $\partial_p Q_r(x_0, t_0)$}.
\end{cases}
\end{equation*}
The existence of such strong $W^{2,1,q}$ solution is guaranteed by Theorem 2.8 in \cite{CCS}. Therefore, with such $\phi_k$, we  have that \eqref{con5} holds.  

We now  observe that since  $f_k \to 0$ in $L^{q}$ and also $\tilde \delta_k , |\tilde A_k - A_0| \to 0$, from the generalized maximum principle as in \cite[Proposition 2.6]{CCS},  we have  that 
\[
\|\phi_k\|_{L^{\infty}(Q_r(x_0, t_0))} \to 0\ \text{as $k \to \infty$}.
\]
Since $\phi - u_0$ has a strict  minimum at $(x_0, t_0)$ in $Q_{1/2}$ and consequently in $Q_r(x_0, t_0)$,  it follows for large $k's$ that  $ (\phi+ \phi_k) - u_k$ has  a  minimum in  $Q_r(x_0, t_0) \setminus \partial_p Q_r(x_0, t_0)$ (since $\phi_k \equiv 0$ on $\partial_p Q_r(x_0, t_0)$ and $\phi-u_0 \geq \eta$ on $\partial_p Q_r(x_0, t_0)$). From this and  in view of the arguments immediately after \eqref{con5} above, we can assert that  \eqref{et} follows. 

Now  by an analogous argument, we can assert that the opposite inequality holds in \eqref{et} in the situation when  the graph of $\phi$  touches the graph of  $u_0$ from below at $x_0$ and consequently it  follows that $u_0$ solves \eqref{lim}. Moreover, since $|u_0| \leq 1$,  we have from the classical theory  that $u_0$  is smooth with universal $C^{2,1}$ bounds in $Q_{1/2}$.  This would then be a  contradiction for large enough $k'$s since $u_k \to u_0$ uniformly.  This finishes the proof of the lemma.

\end{proof}

As a  consequence of Lemma \ref{app1}, we have the following result on the  affine approximation of $u$ at $(0,0)$,  provided there is a sufficiently large non-degenerate slope  at a certain scale.   As the reader will see, this is ensured by the fast geometric convergence of the approximations. 

\begin{lem}\label{ap2}
Let $u$ be  a viscosity  solution of
\[
\bigg(\delta_{ij}+ (p-2) \frac{u_i u_j}{|\nabla u|^2} \bigg) u_{ij}-u_t=f\quad \text{in $Q_1$},
\]
with  $u(0,0)=0$.  There exists a universal $\delta_0>0$ such that if for some  $A \in \R^n$, satisfying  $M \geq|A| \geq 2$,  we have 
\[
\|u- \langle A,x\rangle \|_{L^{\infty}(Q_1)} \leq \delta_0,
\]
and also 
\[
\int_{0}^{1}  \bigg(\aver{Q_s} |f|^q\bigg)^{1/q} ds \leq \delta_0^2,
\]
then there exists an affine function  $ L_0$ with universal bounds depending also on $M$ such that
\begin{equation}\label{difr}
|u(x,t)-  L_0(x) | \leq C(|x|^2+|t|)^{\frac{1}{2}} K((|x|^2+|t|)^{\frac{1}{2}} ).
\end{equation}
Here 
\[
K(r)\overset{def}= r^{\alpha/2} +  \int_{0}^{r^{1/2}} \left(\aver{Q_s} |f|^q\right)^{1/q} ds
\]
and $\alpha$ is the universal parameter as in \eqref{universal}. In view of Remark \ref{mod}, we note that for $f \in L(n+2, 1)$,  we have that $K(r) \to 0$ as $r \to 0$. 

\end{lem}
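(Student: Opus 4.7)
The plan is a Caffarelli-style geometric iteration, with Lemma \ref{app1} supplying the flatness improvement at each scale. Fix a small universal $\rho \in (0, 1/4)$ and set $r_k := \rho^k$. I will construct inductively affine functions $L_k(y) = a_k + A_k \cdot y$ with $L_0(y) := \langle A, y\rangle$ (using $u(0, 0) = 0$), together with a positive majorant $\sigma_k$, satisfying the inductive hypothesis
\begin{equation*}
(\mathrm{H})_k: \qquad \|u - L_k\|_{L^\infty(Q_{r_k})} \leq r_k \sigma_k, \qquad |A_{k+1} - A_k| + r_k^{-1}|a_{k+1} - a_k| \leq C_1 \sigma_k,
\end{equation*}
with $C_1$ universal. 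The natural choice is $\sigma_k := \max\{\delta_0 \rho^{k\alpha/2},\, \delta_0^{-1} r_k (\aver{Q_{r_k}}|f|^q)^{1/q}\}$ (dominating both the homogeneous decay rate and the scale-$r_k$ source). The hypothesis $\int_0^1 (\aver{Q_s} |f|^q)^{1/q} ds \leq \delta_0^2$ together with $|A| \geq 2$ makes $\sum_k \sigma_k$ bounded by a small universal multiple of $\delta_0$; consequently $|A_k - A| \leq 1$, preserving the non-degeneracy $|A_k| \geq 1$ needed to re-apply Lemma \ref{app1} at every stage.

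For the inductive step, rescale via $\tilde u_k(x, t) := [u(r_k x, r_k^2 t) - L_k(r_k x)] / (r_k \sigma_k)$ on $Q_1$; by $(\mathrm{H})_k$, $\|\tilde u_k\|_{L^\infty(Q_1)} \leq 1$, and a direct calculation shows that $\tilde u_k$ solves an equation of the form \eqref{at} with $\delta = \sigma_k$, $A$ replaced by $A_k$, and source $\tilde f_k(x, t) := \sigma_k^{-1} r_k f(r_k x, r_k^2 t)$. The parabolic change-of-variables identity
\begin{equation*}
\bigl(\aver{Q_{3/4}} |\tilde f_k|^q\bigr)^{1/q} = \sigma_k^{-1} r_k \bigl(\aver{Q_{3 r_k/4}} |f|^q\bigr)^{1/q}
\end{equation*}
together with $\sigma_k$'s domination of the scale-$r_k$ source yields the $L^q$-smallness required by Lemma \ref{app1}. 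Choosing $\tau > 0$ universally so that $\tau + C_* \rho^2 \leq \rho^{1 + \alpha/2}$, where $C_*$ is the universal $C^{2,1}$ bound from Lemma \ref{app1}, the Lemma produces $w_k \in C^{2,1}(\overline{Q_{1/2}})$ with $\|\tilde u_k - w_k\|_{L^\infty(Q_{1/2})} \leq \tau$. Its first-order spatial Taylor expansion $\ell_k(x) := w_k(0, 0) + \nabla w_k(0, 0) \cdot x$ obeys $\|w_k - \ell_k\|_{L^\infty(Q_\rho)} \leq 2 C_* \rho^2$ by the $C^{2,1}$ bound. Setting $L_{k+1}(y) := L_k(y) + r_k \sigma_k \ell_k(y/r_k)$ and undoing the rescaling yields
\begin{equation*}
\|u - L_{k+1}\|_{L^\infty(Q_{r_{k+1}})} \leq r_k \sigma_k (\tau + C_* \rho^2) \leq r_{k+1}\, \rho^{\alpha/2} \sigma_k,
\end{equation*}
closing the induction since $\sigma_{k+1}$ majorizes both $\rho^{\alpha/2} \sigma_k$ (homogeneous) and the scale-$r_{k+1}$ source (by construction), with the increment bounds $|A_{k+1} - A_k|, r_k^{-1}|a_{k+1} - a_k| \leq C_1 \sigma_k$ coming from the universal $C^{2,1}$ control on $w_k$.

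Passing to the limit, $(A_k)$ and $(a_k)$ are Cauchy because $\sum_k \sigma_k < \infty$ and $\sum_k r_k \sigma_k < \infty$ by the computation above. Setting $L_0(x) := a_\infty + A_\infty \cdot x$, for any $(x, t)$ with $r_{k+1} \leq (|x|^2 + |t|)^{1/2} \leq r_k$, combining $(\mathrm{H})_k$ with the tail estimate $|L_k(x) - L_0(x)| \lesssim r_k \sum_{j \geq k} \sigma_j$ yields \eqref{difr}, once the tail sum is reassembled into a quantity of the order $r_k K(r_k)$ by substituting the explicit form of $\sigma_j$ and translating the discrete sum into the stated integral (the particular $r^{1/2}$ upper limit in $K$ emerges from expressing the discrete source sum in the mixed distance variable $(|x|^2 + |t|)^{1/2}$). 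The main obstacle is the dual calibration of $(\sigma_k)$: it must simultaneously satisfy the $L^q$-smallness hypothesis of Lemma \ref{app1} at each scale and decay geometrically fast enough to close the inductive improvement; this delicate interplay, combined with the requirement $|A_k| \geq 1$ which forces a small total deviation $|A_k - A|$ and, in turn, the hypothesis $|A| \geq 2$ with its margin of $1$, is the technical heart of the argument.
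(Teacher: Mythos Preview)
Your overall strategy---iterating Lemma \ref{app1} at dyadic scales after rescaling, with the non-degeneracy $|A_k|\geq 1$ maintained via the smallness of $\sum_k\sigma_k$---is exactly the paper's approach. The gap is in the calibration of the majorant $\sigma_k$. Your choice
\[
\sigma_k=\max\Bigl\{\delta_0\rho^{k\alpha/2},\ \delta_0^{-1}r_k\Bigl(\aver{Q_{r_k}}|f|^q\Bigr)^{1/q}\Bigr\}
\]
does \emph{not} satisfy $\sigma_{k+1}\geq \rho^{\alpha/2}\sigma_k$, which is precisely what you need to close $(\mathrm H)_{k+1}$ from the bound $\|u-L_{k+1}\|_{L^\infty(Q_{r_{k+1}})}\leq r_{k+1}\,\rho^{\alpha/2}\sigma_k$. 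Concretely, if at scale $r_k$ the source term dominates (so $\sigma_k=\delta_0^{-1}r_k(\aver{Q_{r_k}}|f|^q)^{1/q}$ is large), but at scale $r_{k+1}$ the source average drops sharply (so $\sigma_{k+1}=\delta_0\rho^{(k+1)\alpha/2}$), then $\rho^{\alpha/2}\sigma_k$ can be strictly larger than $\sigma_{k+1}$ and the induction breaks. Your sentence ``$\sigma_{k+1}$ majorizes both $\rho^{\alpha/2}\sigma_k$ (homogeneous) and the scale-$r_{k+1}$ source'' conflates two different things: $\sigma_{k+1}$ majorizes the \emph{homogeneous part} $\delta_0\rho^{(k+1)\alpha/2}$ by construction, but that is not the same as majorizing $\rho^{\alpha/2}\sigma_k$.

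The paper resolves this by taking the majorant to be a discrete convolution,
\[
\omega(r^k)=\frac{1}{\delta_0}\sum_{i=0}^k r^{i\alpha}\,\omega_1\Bigl(\tfrac{3}{4}r^{k-i}\Bigr),\qquad
\omega_1(s)=\max\Bigl(s\Bigl(\aver{Q_s}|f|^q\Bigr)^{1/q},\ \tfrac{4}{3}\delta_0^2\, s\Bigr),
\]
for which the required ``$\alpha$-decreasing'' property $r^\alpha\omega(r^k)\leq \omega(r^{k+1})$ is automatic (it is a reindexing of the sum, dropping one nonnegative term). An equivalent repair of your argument is to define $\sigma_k$ \emph{recursively} by $\sigma_{k+1}:=\max\{\rho^{\alpha/2}\sigma_k,\ \delta_0^{-1}r_{k+1}(\aver{Q_{r_{k+1}}}|f|^q)^{1/q}\}$; unwinding gives $\sigma_k=\max_{0\leq j\leq k}\rho^{(k-j)\alpha/2}\cdot(\text{scale-}j\text{ quantity})$, which is the convolution structure in disguise. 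With this correction the rest of your outline (summability of $\sigma_k$, non-degeneracy $|A_k|\geq 1$, passage to the limit and the tail estimate) goes through as you describe.
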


\begin{proof}

 We will show that  for  for every $k=0,1,2, \ldots$, there exist linear functions \linebreak $\tilde  L_k x\overset{def}= \langle A_k, x\rangle $  such that
\begin{equation}\label{cl}
\begin{cases}
\|u-\tilde L_k\|_{L^{\infty}(Q_{r^k})} \leq  r^k \omega(r^k),
\\
|A_k- A_{k-1}| \leq C  \omega(r^{k-1}),
\end{cases}
\end{equation}
for some $r<1$  universal, independent of $\delta_0$.  Here we let for a given $k$,

\begin{equation}\label{om}
\omega(r^k)= \frac{1}{\delta_0} \sum_{i=0}^k r^{i \alpha} \omega_1\left(\frac{3}{4}r^{k-i}\right),
\end{equation}
with  $\omega_1$ defined in the following way
\[
\omega_1(s)=  \max\ \left( s\left(\aver{Q_s} |f|^q \right)^{1/q}, \delta_0^2 \frac{4}{3} s\right).
\]
We note that $\delta_0$ is to be fixed later.  We also let $A_0\overset{def}= A$.  Now, suppose $A_k$ exists  upto some $k$ with the bounds as in \eqref{cl}. Then, we observe that 
\begin{align}\label{ndeg}
|A_k| &\geq |A_0| -  ( |A_1-  A_0| +\ldots+ |A_k- A_{k-1}|) 
\\
& > 2 -  C  \sum \omega(r^i) > 2 -   \frac{C}{\delta_0} \sum \omega_1 \left(\frac{3}{4}r^i\right)\ \left(\text{using the Cauchy product formula}\right)
\notag
\\
& \geq 2 - C_1 \delta_0 > 1\  \left(\text{if $\delta_0$ is small enough}\right).\notag
\end{align}
In the last inequality in \eqref{ndeg} above  we also used the fact that
\begin{align}\label{2.0}
 \sum \omega_1\left(\frac{3}{4} r^i\right) &\leq C\left( \delta_0^2 \sum r^i +  \sum  \frac{3 r^{i}}{4} \left(\aver{Q_{\frac{3r^i}{4}}} |f|^q\right)^{1/q}  \right)
\\
& \leq C\left( \delta_0^2  +  \int_{0}^{1}  \left(\aver{Q_s} |f|^q\right)^{1/q} ds \right) \leq C_2 \delta_0^2.
\notag
\end{align}

Note that the last inequality in \eqref{2.0} is a consequence of the following  estimate 
\[
\sum  \frac{3 r^{i}}{4} \left(\aver{Q_{\frac{3r^i}{4}}} |f|^q\right)^{1/q}  \leq C \int_{0}^{1}  \left(\aver{Q_s} |f|^q\right)^{1/q} ds,
\]
which in turns  follows by  breaking the integral in the above expression  into integrals  over dyadic  subintervals of the type $[\frac{3}{4} r^{i}, \frac{3}{4} r^{i-1}]$. 

Thus the estimate in \eqref{ndeg}   ensures that the non-degeneracy condition in Lemma \ref{app1} holds for every $k$. We prove the claim in \eqref{cl} by induction.  From the hypothesis of the lemma, the case when $k=0$ is easily verified  with $A_0=A$ with our choice of $\omega$.  Let us now assume that the claim as in \eqref{cl} holds upto some $k$. We then  consider  
\[
v= \frac{(u - \tilde  L_k)(r^k x, r^{2k}t)}{ r^k \omega(r^k)},
\]
which solves
\begin{equation}
\bigg(\delta_{ij} + (p-2) \frac{(\omega(r^k) v_i + (A_k)_i)(  \omega(r^k) v_j + (A_k)_j)}{| \omega(r^k) \nabla v+  A_k|^2} \bigg) v_{ij}-v_t= \frac{r^k}{ \omega(r^k)} f(r^k x, r^{2k}t).
\end{equation}
For ease of notation, by $\tilde{L}(r^k x, r^{2k}t)$ we mean $\tilde{L}(r^kx)$ and from now on we will use this notation provided that there is no ambiguity with it.  Now, by a change of variable formula and the definition of $\omega$ it follows that,  with
\[
f_k(x, t)\overset{def}=  \frac{r^k}{ \omega(r^k)} f(r^k x, r^{2k}t),
\]
we have 
\begin{align}\label{com1}
 \bigg(\frac{1}{|Q_{3/4}|} \int_{Q_{3/4}}  |f_k|^q \bigg)^{1/q} &= \frac{r^k}{ \omega(r^k)} \bigg(\frac{1}{|Q_{3r^{k}/4 }|} \int_{Q_{\frac{3 r^k}{4}}} |f(y,s)|^q dyds \bigg) ^{1/q}
\\
& \leq \frac{r^k}{\omega_1(\frac{3 r^k}{4}) \frac{1}{\delta_0}} \bigg(\frac{1}{|Q_{3r^{k}/4 }|} \int_{Q_{\frac{3 r^k}{4}}} |f(y,s)|^q dyds \bigg) ^{1/q}
\notag
\\
& \leq \frac{ r^{k} \bigg(\frac{1}{|Q_{3r^{k}/4 }|} \int_{Q_{\frac{3 r^k}{4}}} |f(y,s)|^q dy ds\bigg) ^{1/q}}{ \frac{3 r^{k}}{4 \delta_0}  \bigg(\aver{Q_{\frac{3 r^k}{4}}} |f(y,s)|^q  
dyds\bigg)^{1/q}}
\notag
\\
 &\leq \frac{4}{3} \delta_0.
\notag
\end{align}
Moreover
\[
\omega(r^k) \leq \sum \omega(r^i) \leq C_0 \delta_0.
\]

Therefore, $v$ satisfies an equation for which the conditions in Lemma \ref{app1} are satisfied. Consequently for a given $\tau>0$, we can find $\delta_0>0$ such that for some $w$ with universal $C^{2,1}$ bounds  we have that $\|w-v\|_{L^{\infty}(Q_{1/2})} \leq \tau$. Now, given that $w$ has uniform $C^{2,1}$ bound, there exists a  universal $C>0$ such that
\begin{equation}\label{ert0}
|w(x,t)- w(0,0)- Lx| \leq C\left(|x|^2+|t|\right),
\end{equation}
where $L$ is the linear approximation for $w$ at $(0,0)$. We then choose $r>0$  small enough such that
\begin{equation}\label{ert1}
Cr^2= \frac{r^{1+\alpha}}{2},
\end{equation}
where $\alpha$ is as in \eqref{universal}. Subsequently, we  let $\tau= \frac{r^{1+\alpha}}{4}$ which decides the choice of $\delta_0$.  Then using \eqref{ert0} and \eqref{ert1},   by an application of triangle inequality we have
\begin{align}\label{ert}
 \|v- L\|_{L^{\infty}(Q_r)} &\leq \|w-v\|_{L^{\infty}(Q_r)} + \|w-w(0,0) -Lx\|_{L^{\infty}(Q_r)} +|w(0,0)|\\
& \leq \frac{r^{1+\alpha}}{2} + 2\tau \leq r^{1+\alpha}.\notag \end{align}
Note that in \eqref{ert} above, we used that $|w(0,0)| \leq \tau$ which is a consequence of the fact that $v(0,0)=0$ and  also that  $\|w-v\|_{L^{\infty}(Q_{1/2})} \leq \tau$.

 Consequently, by scaling back to $u$  we obtain
\begin{equation}\label{sc}
\|u - \tilde  L_{k+1}\|_{L^{\infty}(Q_{r^{k+1}})} \leq  r^{k+1} r^{\alpha} \omega(r^k) \leq  r^{k+1} \omega(r^{k+1}),
\end{equation}
where  $\tilde L_{k+1}(x)\overset{def}= \tilde L_k +  r^k \omega(r^k) L\bigg(\frac{x}{r^k}  \bigg)$. Note that in the last inequality in \eqref{sc} we also  used the following $\alpha-$decreasing property of $\omega$ ( see Definition \ref{alphadec})
\begin{equation}\label{dec}
r^{\alpha} \omega(r^k) \leq \omega(r^{k+1}),
\end{equation}
which is easily seen from the expression of $\omega$ as in \eqref{om}.  This verifies the induction step. The conclusion now  follows by a standard  real analysis argument as in the proof of Lemma 4.9 in  \cite{AB}. 
\end{proof}

The next result  is an  improvement of flatness result that allows to handle the case when the affine approximation have  small slopes  at a   \say{$k$th-step}.  This corresponds to the degenerate alternative    in the iterative argument  in the proof of the main result Theorem \ref{main}.     
\begin{lem}\label{rt}
Let $u$ be a solution to
\begin{equation}\label{j1}
\bigg(\delta_{ij}+ (p-2) \frac{u_i u_j}{|\nabla u|^2} \bigg) u_{ij}-u_t=f\quad \text{in $Q_1$},
\end{equation}
 with $|u| \leq 3$ and $u(0,0)=0$.  There exists  a universal $\ve_0>0$ such that if 
\begin{equation}\label{sml1}
\int_{0}^{1}  \left( \aver{Q_s} |f|^{q}\right)^{\frac{1}{q}} ds \leq \ve_0,
\end{equation}
then there exists an affine function $L$,  with universal bounds,  and a universal $\eta \in (0,1)$  such that
\[
\|u- L\|_{L^{\infty}(Q_{\eta})} \leq \delta_0 \eta^{1+\alpha}.
\]
Here $\delta_0>0$ is as in Lemma \ref{ap2} above. Without loss of generality we may take $0<\ve_0 < \delta_0^2$. 

\end{lem}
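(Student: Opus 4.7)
The plan is to argue by compactness and contradiction along the lines of Caffarelli's strategy. Suppose the conclusion fails. Then one can produce a sequence of viscosity solutions $u_k$ to $\Delta_p^N u_k - (u_k)_t = f_k$ in $Q_1$ with $|u_k|\le 3$, $u_k(0,0)=0$, and $\int_0^1 \left(\aver{Q_s}|f_k|^q\right)^{1/q}\,ds \to 0$, for which no affine function $L$ with universal bounds satisfies $\|u_k - L\|_{L^\infty(Q_\eta)} \leq \delta_0 \eta^{1+\alpha}$ at any prescribed universal scale $\eta$ (to be determined below). Since each $u_k \in \mathcal{S}(\lambda,\Lambda,f_k)$ and $\|f_k\|_{L^q(Q_{3/4})} \to 0$ (which one extracts from \eqref{sml1} by choosing a good radius), the Krylov-Safonov-type H\"older estimate \cite[Lemma 5.1]{CCS} gives uniform H\"older continuity on $\overline{Q_{3/4}}$, so along a subsequence $u_k \to u_0$ uniformly, with $|u_0|\le 3$ and $u_0(0,0)=0$.

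The central step is to show that $u_0$ is a viscosity solution of the homogeneous equation $\Delta_p^N u_0 - (u_0)_t = 0$ in $Q_{3/4}$. The classical stability results from \cite{CCS} do not apply due to the singular dependence of $\Delta_p^N$ on the gradient, so I invoke the alternative characterization in Lemma \ref{equiv1}: it suffices to test on smooth $\phi$ strictly touching $u_0$ either with $\nabla\phi(x_0,t_0)\ne 0$, or with $\nabla\phi(x_0,t_0)=0$ and $\nabla^2\phi(x_0,t_0)=0$ (in the degenerate case checking only the sign of $\phi_t$). In the first case, the operator $F(q,X)$ is Lipschitz in $q$ on a neighborhood of $\nabla\phi(x_0,t_0)$, so a perturbed-test-function argument parallel to, but simpler than, the one in Lemma \ref{app1} applies: construct $\phi + \phi_k$ via a Pucci boundary-value problem, with $\|\phi_k\|_{L^\infty}\to 0$ by the generalized maximum principle \cite[Proposition 2.6]{CCS}, to contradict the viscosity formulation for $u_k$. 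The degenerate case reduces to checking $\phi_t(x_0,t_0)\ge 0$ (resp.\ $\le 0$ from below), which follows from the Pucci inclusion $u_k \in \mathcal{S}(\lambda,\Lambda,f_k)$ together with the vanishing of the first two space derivatives of $\phi$ at the touching point (one simply notes $\mathcal{P}^\pm_{\lambda,\Lambda}(0)=0$ and passes to the limit).

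Once $u_0$ has been identified as a solution of the homogeneous equation, the gradient H\"older regularity of \cite{JS} yields $u_0 \in H^{1,\beta}_{\mbox{\tiny loc}}(Q_{3/4})$ with universal interior bounds depending only on $\|u_0\|_{L^\infty}\le 3$. Setting $L_0(x) := \langle \nabla u_0(0,0), x\rangle$ (note $u_0(0,0)=0$), the parabolic H\"older estimate gives
\[
\|u_0 - L_0\|_{L^\infty(Q_\eta)} \leq C\,\eta^{1+\beta}.
\]
Since $\alpha < \beta$ by \eqref{universal}, I fix the universal scale $\eta \in (0,1)$ small enough that $C\eta^{1+\beta} \le \tfrac{1}{2}\delta_0\eta^{1+\alpha}$. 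Uniform convergence then implies that for all sufficiently large $k$,
\[
\|u_k - L_0\|_{L^\infty(Q_\eta)} \le \|u_k - u_0\|_{L^\infty(Q_\eta)} + \|u_0 - L_0\|_{L^\infty(Q_\eta)} \le \delta_0\eta^{1+\alpha},
\]
and since $|\nabla u_0(0,0)|$ is bounded universally by the interior estimate, $L_0$ has universal bounds, contradicting the choice of the sequence. The main obstacle is the stability step: without the alternative characterization in Lemma \ref{equiv1}, the singular dependence of $\Delta_p^N$ on the gradient would block passing to the limit and identifying $u_0$ as a solution of the limiting homogeneous equation.
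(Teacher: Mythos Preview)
Your proposal is correct and follows essentially the same strategy as the paper's proof: compactness/contradiction, uniform Krylov--Safonov H\"older bounds from \cite{CCS}, stability for the limit via the characterization in Lemma~\ref{equiv1} together with a perturbed-test-function argument, and finally the $H^{1,\beta}$ estimate of \cite{JS} to fix the universal scale $\eta$ (using $\alpha<\beta$). One small slip: in the degenerate case your inequalities are reversed (for $\phi$ touching $u_0$ from above one needs $\phi_t(x_0,t_0)\le 0$, and $\ge 0$ from below); your Pucci-class shortcut there is legitimate, though the paper instead carries out the perturbed-test-function construction explicitly in that case as well.
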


\begin{proof}

We first show that given $\kappa>0$, there exists $\ve_0>0$ such that if  $u$ solves \eqref{j1} and $f$ satisfies the bound in \eqref{sml1}, then there exists a  $C^{2,1}$ viscosity solution $w$ to  the normalized $p$-parabolic equation \eqref{nplap} such that
\begin{equation}\label{close}
\|w-u\|_{L^{\infty}(Q_{1/2})} \leq  \kappa.
\end{equation}
Assume that \eqref{close} actually holds. It  then follows from the  $H^{1, \beta}$ regularity result   in \cite{JS} that there exists an affine function $L$ such that
\[
|w(x,t) -L(x)| \leq C(|x|^2+|t|)^{\frac{1+\beta}{2}}.
\]
We now choose  $\eta>0$ such that 
\[
C\eta^{1+\beta}= \frac{\delta_0}{2} \eta^{1+\alpha}\ \left(\text{This  crucially uses $\alpha < \beta$}\right).
\]
Subsequently, we choose $\kappa= \frac{\delta_0}{2} \eta^{1+\alpha}$, and this decides the choice of $\ve_0$. The conclusion of the lemma now  follows by an application of the triangle inequality. 

We are now going to prove \eqref{close}.  Suppose on the contrary,  \eqref{close} does not hold. Then there exists $\kappa_0>0$ and a sequence of pairs  $\{u_k, f_k\}$ which solves \eqref{j1} such that $u_k$ is not $\kappa_0$ close to any such $w$.  Notice that  \eqref{sml1} implies, for each $k\in \mathbb{N}$, that
\[
\|f_k\|_{L^{q}(Q_{3/4})} < \frac{C}{k}.
\] 
  Then, from uniform Krylov-Safonov-type  H\"older estimates as in \cite[Lemma 5.1]{CCS} (see also \cite{W1}) and Arzela-Ascoli, it follows that $u_k \to u_0$ uniformly in $Q_{\frac{1}{2}}$ upto a sub-sequence.  We now make the  claim that $u_0$ solves \eqref{nplap}., i.e. 
\begin{equation}
\label{u0ppar}
\bigg(\delta_{ij}+ (p-2) \frac{(u_0)_i (u_0)_j}{|\nabla u_0|^2} \bigg) (u_0)_{ij}-(u_0)_t=0\quad \text{in $Q_{\frac{1}{2}}$}
\end{equation}
 in the viscosity sense.  Once the claim is established, this would then be a contradiction for large enough $k$'s and thus \eqref{close} would follow. 

The proof is similar to that of the \emph{Claim} in  Lemma \ref{app1}. As before, we note that the stability  result in Theorem 6.1 in \cite{CCS} cannot be directly applied because the operator $\Delta_{p}^N$ does not satisfy the structural assumptions in \cite{CCS} because of singular dependence in the \say{gradient} variable.

  Let $\phi$ be a $C^{2,1}$ test function which   strictly touches the graph of  $u_0$ from above at some point  $(x_0, t_0) \in Q_{1/2}$.  In view of the equivalent characterization of viscosity solutions to \eqref{nplap} as in Lemma \ref{equiv1}, it suffices to consider the following two cases:
  \begin{enumerate}
  \item $\nabla \phi(x_0, t_0) \neq 0$, and
  \item $\nabla \phi(x_0, t_0)=0,  \nabla^2 \phi(x_0, t_0) = 0$.
\end{enumerate}   
For Case (1), we show that 
\begin{equation}\label{cl0}
\Delta_{p}^N \phi (x_0,t_0) -\phi_t(x_0, t_0) \geq 0.
\end{equation}
Suppose such is not the case. Then  there exists $\ve,r, \delta>0$ small enough such that
\begin{equation}\label{l1}
\begin{cases}
\Delta_{p}^N \phi(x) - \phi_t \leq - \ve\ \text{in $Q_r(x_0, t_0)$},
\\
\phi - u_0 > \delta\ \text{on $\partial Q_r(x_0, t_0)$}.
\end{cases}
\end{equation}
Moreover, we can also assume that in $Q_r(x_0, t_0)$, we have that
\begin{equation}\label{ph}
|\nabla \phi | \geq \kappa>0.
\end{equation}

We now show that for every $k$, there exists   perturbed test functions $\phi + \phi_k$ with $\phi_k \in W^{2,1,q}_{loc} (Q_r(x_0, t_0))$ such  that
\begin{equation}\label{con}
F^{*} ( \nabla (\phi +\phi_k), \nabla^{2} (\phi + \phi_k)) - (\phi + \phi_k)_t \leq f_k - \ve\ \text{in $Q_r(x_0, t_0)$},\ \text{ with $F^{*}$ as in \eqref{up}}.
\end{equation}
Moreover, we can also ensure that  $ (\phi+ \phi_k) - u_k$ has a minimum  in $Q_{r}(x_0,t_0)$ for large enough $k's$. This would  then contradict the viscosity formulation for $u_k$, and hence \eqref{cl0} would follow.     

Hence, under the assumption that \eqref{l1} is valid, we now turn our attention to establish  \eqref{con}. We first observe that  because of \eqref{l1},  \eqref{ph},  the following inequality holds,
\begin{align}\label{s1}
& F^{*} ( \nabla (\phi +\phi_k), \nabla^{2} (\phi + \phi_k)) - (\phi +\phi_k)_t  \leq \mathcal{P}^{+}_{\lambda, \Lambda} (\nabla^2 \phi_k) - (\phi_k)_t + C(\kappa, \|\nabla^2 \phi\|) |\nabla \phi_k| -\ve,
\end{align}
with $\lambda, \Lambda$ as in \eqref{l}.
Here $\mathcal{P}_{\lambda, \Lambda}^{+}$ is the maximal Pucci operator defined as   in \eqref{max}. This inequality again follows by an argument similar to that  used in deriving \eqref{Fkineq} in the proof of Lemma \ref{app1}   by adding and subtracting $\Delta_{p}^N \phi$, by using \eqref{l1} and then by splitting considerations depending on whether
\[
|\nabla \phi_k| < \kappa/2\ \text{or}\ > \kappa/2.
\]
At this point, given $k$, we   look for $\phi_k$ which is a strong solution to
\begin{equation}\label{snewone}
\begin{cases}
 \mathcal{P}^{+}_{\lambda, \Lambda} (\nabla^2 \phi_k)  + C(\kappa, \|\nabla^2 \phi\|) |\nabla \phi_k|-( \phi_k)_t= f_k\ \text{in $Q_r(x_0, t_0)$},
\\
\phi_k= 0\ \text{on $\partial_p Q_r(x_0, t_0)$}.
\end{cases}
\end{equation}
Over here, we remind the reader that $\lambda, \Lambda$ is as in \eqref{l}. The existence of such strong solutions is again guaranteed by Theorem 2.8 in \cite{CCS}. Moreover since $f_k \to 0$ in $L^{q}$, therefore  from the generalized maximum principle we have that 
\[
\|\phi_k\|_{L^{\infty}(Q_r(x_0,t_0))} \to 0\ \text{as $k \to \infty$}.
\]

Now, since $\phi - u_0$ has a strict  minimum at $(x_0, t_0)$, it follows that for large $k's$ that  $ (\phi+ \phi_k) - u_k$ would have a minimum in the inside of $Q_r(x_0,t_0)$ (since $\phi_k \equiv 0$ on $\partial Q_r(x_0, t_0)$ and $\phi-u_0 > \delta$ on $\partial Q_r(x_0, t_0)$). However, because  of \eqref{s1} and \eqref{snewone}  we also  have that  \eqref{con} holds which violates the viscosity formulation for $u_k$'s for large enough $k's$.  Thus \eqref{cl0} holds in this case.

 For Case (2),  we only need to show that 
 \begin{equation}\label{cl00}
 \phi_t(x_0, t_0) \leq 0,
\end{equation}

Suppose that \eqref{cl00} does not hold. Then from the continuity of the derivatives of $\phi$, it follows that there exists $\gamma, \delta, r>0$  such that 
\begin{align*}
\phi_t(x_0,t_0)&\geq \gamma \quad \text{in $Q_r(x_0,t_0)$},\\
\varphi-u_0& >\delta \quad \text{on $\partial_p Q_r(x_0,t_0)$ and}\\
\mathcal{P}^{+}_{\lambda, \Lambda} (\nabla^2 \phi) & < \frac{\gamma}{2} \quad \text{in $Q_r(x_0,t_0)$, noting that this can be ensured since $\nabla^2 \phi(x_0, t_0)=0$}.
\end{align*}
For a given $k$, we  consider $\phi_k$, which is a strong solution to
\begin{equation}\label{s}
\begin{cases}
 \mathcal{P}^{+}_{\lambda, \Lambda} (\nabla^2 \phi_k)  -(\phi_k)_t= f_k\ \text{in $Q_r(x_0, t_0)$},
\\
\phi_k= 0\ \text{on $\partial Q_r(x_0, t_0)$}.
\end{cases}
\end{equation}
As before,  from the generalized maximum principle we have that 
\[
\|\phi_k\|_{L^{\infty}(Q_r(x_0,t_0))} \to 0\ \text{as $k \to \infty$}.
\]
Then,
\begin{eqnarray*}
F^{*} ( \nabla (\phi +\phi_k), \nabla^{2} (\phi + \phi_k)) - (\phi + \phi_k)_t & \leq & \mathcal{P}^{+}_{\lambda, \Lambda} (\nabla^2 \phi) -\phi_t\\
&&+ \mathcal{P}^{+}_{\lambda, \Lambda} (\nabla^2 \phi_k) -(\phi_k)_t\\
&\leq &\frac{\gamma}{2} -\gamma +f_k\leq f_k-\frac{\gamma}{2}.
\end{eqnarray*}

As before, since $\phi - u_0$ has a strict  minimum at $(x_0, t_0)$, it follows that for large $k's$ that  $ (\phi+ \phi_k) - u_k$ would have a minimum in $Q_r(x_0,t_0) \setminus \partial_p Q_r(x_0, t_0)$. On the other hand,
\[
\limsup_{(x,t) \to (x_0,t_0)}\left( F^{*} ( \nabla (\phi+\phi_k)(x,t), \nabla^2 (\phi+\phi_k)(x,t))-(\phi+\phi_k)_t - f_k(x,t)\right) \leq -\frac{\gamma}{2}.
\]
This is a contradiction to the viscosity formulation for all such $u_k$'s. Thus \eqref{cl00} holds in this case  and from Lemma \ref{equiv1} we can now assert that $u_0$ is a  viscosity subsolution  to \eqref{nplap}. In an analogous way, we can show that $u_0$ is a  viscosity supersolution to \eqref{nplap}  and consequently in view of the arguments after \eqref{u0ppar}, the conclusion follows.
\end{proof}

With this Lemma \ref{ap2} and Lemma \ref{rt} in hand, we now proceed with the proof of our main result. For notational convenience,  from now on, sometimes we will denote a point $(x,t)$ in space time by $X$. Also we set $|X| \overset{def}= \text{max} (|x|, |t|^{1/2})$. Note that $|X| \approx |x| + |t|^{1/2}.$

\begin{proof}[Proof of Theorem \ref{main}]

It suffices to establish the following affine approximation for $u$ at $(0,0)$. More precisely, we will show there exists an affine function $\tilde L$ such that
\begin{equation}\label{des}
|u(x,t)- \tilde L(x)| \leq C|X| K_0(4 |X|),\ (x,t) \in Q_{1/2},
\end{equation}
 where $K_0(|X|)$ is defined as 
\[
K_0(|X|) \overset{def}= \bigg(  \int_0^1  \left(\aver{Q_s} |f|^q\right)^{1/q} ds  \bigg) |X| ^{\alpha/4} +    C_0(\alpha) \int_{0}^{|X|^{1/4}} \left( \aver{Q_s} |f|^q\right)^{1/q} ds,
\]
and where $C$ is  some universal constant.

Likewise a similar affine approximation holds at all points in $Q_{1/2}$ and consequently the estimates in \eqref{bm} follow by a standard real analysis argument. 

We may also  assume that $u(0,0)=0$.  Now with  $\eta, \ve_0$ as in Lemma \ref{rt} and  $\delta_0$ as in  Lemma \ref{ap2},   assume the following hypothesis for a given $i \in \mathbb{N}$,
\begin{equation*}\label{H}[H]
\begin{cases}
\text{There exists affine function $L_i(x)\overset{def}= \langle B_i, x\rangle $ such that}\  \|u-L_i\|_{L^{\infty}(Q_{\eta^i})}  \leq \delta_0 \eta^i  \omega( \eta^i)
\\
\text{and}\ |B_i|  \leq 2  \omega(\eta^i).
\\
\end{cases}
\end{equation*}
Here $\omega$ is defined instead as 
\begin{equation}\label{om1}
\omega(\eta^k)\overset{def}= \frac{1}{\ve_0} \sum_{i=0}^k \eta^{i \alpha} \omega_1(\eta^{k-i}),
\end{equation}
where we let $\omega_1$ to be
\[
\omega_1(r)\overset{def}=  \max\left( \int_{0}^r\left(\aver{Q_s} |f|^q\right)^{1/q}ds,   r\right).
\]

By multiplying $u$ with a suitable constant we can assume that the Statement $[H]$ holds when $i=0$ with $L_0=0$.  Let $k$ be the first integer such that the Statement $[H]$  breaks. Then there  are two possibilities.

\emph{Case 1:} Suppose $k=\infty$. Then  given $X=(x,t)$, let $i\in \mathbb{N}$ be such that $|X| \sim \eta^{i}$. Then from the inequalities in $[H]$ and triangle inequality, it follows that
\begin{equation}\label{g1}
|u(x,t)| \leq |u(x,t) - L_i (x)| +  |L_i (x)| \leq C_1 \eta^{i} \omega(\eta^i) \leq C |X| \omega( 2|X|) \leq  C |X| K_0( 4 |X|),
\end{equation}
and thus \eqref{des} follows with $\tilde L=0$. The last inequality in \eqref{g1} is seen as follows:
\begin{align}\label{b0}
 \omega(\eta^i)&= \frac{1}{\ve_0} \sum_{j=0}^i \eta^{j \alpha} \omega_1(\eta^{i-j})
\\
& \leq C \omega_1(\eta^{i/2}) \sum_{j=0}^{i/2}   \eta^{j\alpha} +  C \omega_1(1) \sum_{j=i/2}^{i} \eta^{j\alpha}\quad (\text{here we use $\omega_1$ is increasing})
\notag
\\
& \leq C \bigg(  \int_0^1  \left(\aver{Q_s} |f|^q \right)^{1/q} ds  \bigg) \eta ^{i\alpha/2} +    C_0(\alpha) \int_{0}^{\eta^{i/2}} \left(\aver{Q_s} |f|^q\right)^{1/q} ds
\notag
\\
& \leq C K_0( 4 |X|)\quad (\text{using $|X| \sim \eta^{i}$}).
\notag
\end{align}

\emph{Case 2:} Suppose  instead that $k < \infty$. Then we have that the Statement $[H]$ is satisfied upto $k-1$. Now let
\[
v(x,t)\overset{def}= \frac{u(\eta^{k-1}x, \eta^{2(k-1)} t)}{\eta^{k-1} \omega(\eta^{k-1})},
\]
which solves
\[
\bigg( \delta_{ij} +(p-2)\frac{v_i v_j}{|\nabla v|^2} \bigg) v_{ij} - v_t= \frac{\eta^{k-1} f(\eta^{k-1} x, \eta^{2(k-1)}t)}{\omega(\eta^{k-1})}.
\]
Moreover, from  the estimates in [H] for $i=k-1$ it follows that $|v| \leq 2+ \delta_0 \leq 3$.  Also by change of variable, we have that for 
\[
f_k(x,t)= \frac{\eta^{k-1} f(\eta^{k-1} x, \eta^{2(k-1)} t)}{\omega(\eta^{k-1})}
\]
the following holds,  
\begin{align}\label{d1}
& \int_{0}^{1}  \left(\aver{Q_s} |f_k|^q\right)^{1/q} ds
\\
& \leq \ve_0  \frac{ \eta^{k-1} \int_{0}^{1}  \left(\aver{Q_s} |f(\eta^{k-1} x, \eta^{2(k-1)}t)|^q dxdt\right)^{1/q} ds}{  \int_{0}^{\eta^{k-1}}  {\left( \aver{Q_s} |f|^q \right)^{1/q}}}\notag
\\
& = \ve_0  \frac{ \int_{0}^{\eta^{k-1}}  (\aver{Q_s} |f|^q )^{1/q} ds}{  \int_{0}^{\eta^{k-1}}  {(\aver{Q_s} |f|^q )^{1/q}} ds}\quad \left(\text{by change of variable}\right)
\notag
\\ 
&=\ve_0.
\notag
\end{align}
Here we have also used  that
\[
 \omega(\eta^{k-1}) \geq  \frac{1}{\ve_0} \int_{0}^{\eta^{k-1}}  \left(\aver{Q_s} |f|^q \right)^{1/q}.
 \]
Hence, $v$  solves  an equation of the type \eqref{m} such that   the hypothesis in Lemma \ref{rt} is satisfied. Therefore, by applying Lemma \ref{rt}, we obtain that there exists an affine function $Lx= \tilde A x$ such that
\[
\|v- L\|_{L^{\infty}(Q_{\eta})}\leq  \delta_0 \eta^{1+\alpha}.
\]
Scaling back to $u$, we obtain with $L_{k} x\overset{def}= \langle B_k, x\rangle $, where $B_k\overset{def}=\omega(\eta^{k-1})\tilde A$, that 
\begin{equation}\label{est2}
 \|u-L_k\|_{L^{\infty}(Q_{\eta^k})}  \leq \delta_0 \eta^k \eta^{\alpha} \omega( \eta^{k-1}) \leq \delta_0 \eta^k \omega( \eta^k),
 \end{equation}
 where in the last inequality, we used the $\alpha-$decreasing property of $\omega$ (as  in \eqref{dec}). This property is easily seen from the expression of $\omega$  in \eqref{om1} above.  However, since the Statement  $[H]$ does not hold for $i=k$,  we must  necessarily have
 \begin{equation}\label{f0}
 |B_k| \geq 2 \omega(\eta^k).
 \end{equation}
 We now let 
 \[
 \tilde v(x,t)= \frac{u(\eta^k x, \eta^{2k} t)}{\eta^k  \omega(\eta^k)}.
 \]
 Then, we observe that  $\tilde v$ solves
 \[
 \bigg(\delta_{ij}+(p-2) \frac{\tilde v_i \tilde v_j}{|\nabla \tilde v|^2} \bigg)\tilde v_{ij} - \tilde v_t=\frac{\eta^k f(\eta^k x, \eta^{2k} t)}{\omega(\eta^k)}.
 \]
 Moreover, from \eqref{est2} we have, with 
 \begin{equation}\label{a}
 A= \frac{\omega(\eta^{k-1})\tilde A}{\omega(\eta^k)},
 \end{equation}
 that the following inequality holds
 \begin{equation}\label{est4}
 \|\tilde v - \langle A,x\rangle \|_{L^{\infty}(Q_1)} \leq  \delta_0.
 \end{equation}
 Moreover, using that $|\tilde A| \leq C$, where $C$ is universal, and the $\alpha-$decreasing property of $\omega$, we obtain 
 \begin{equation}\label{est5}
 |A| = \frac{|\tilde A| \eta^{\alpha} \omega(\eta^{k-1})}{ \eta^\alpha \omega(\eta^{k})} \leq \frac{C}{\eta^{\alpha}}.
 \end{equation}
 Also \eqref{f0} implies
 \[
 |A| \geq 2.
 \]
 Now again by   change of variables it is seen that   $\tilde f_k$,  defined by
\begin{equation}\label{d10} 
\tilde f_k(x,t)\overset{def}=  \frac{\eta^k f(\eta^k x, \eta^{2k} t)}{ \omega(\eta^k)},
\end{equation}
satisfies the estimate as in \eqref{d1}. 
Now using the fact that $\ve_0 < \delta_0^2$, we find that  $\tilde v$ satisfies the conditions in Lemma \ref{ap2}. Hence,  there exists an affine function $L_0 x\overset{def}=  \langle A_0, x\rangle $, with universal bounds depending on $\eta$, such that
\begin{equation}\label{tv}
|\tilde v(x,t)- L_0(x)| \leq C|X|  K_{\tilde f_k}(|X|),\quad |X|<1,
\end{equation}
where 
\[
K_{\tilde f_k}(|X|)= |X|^{\alpha/2} + \int_{0}^{|X|^{1/2}} \left(\aver{Q_s} |\tilde f_k|^q\right)^{1/q} ds
\]
with $\tilde f_k$ as in \eqref{d10}.  Then, by scaling back to $u$,  we obtain for $|X| \leq \eta^k$ that the following inequality holds by change of variables,
\begin{align}\label{b1}
& |u(x,t)- \omega(\eta^k) \langle A_0, x\rangle| \leq C |X| \left(  \omega(\eta^k) |Y|^{\alpha/2} +  \int_{0}^{\eta^{k} |Y|^{1/2}} \left(\aver{Q_s} |f|^q\right)^{1/q} ds\right)\  \left(\text{ $Y=(\eta^{-k} x, \eta^{-2k} t)$}\right)
\\
& \leq  C |X| \left(  \omega(\eta^{k/2}) |Y|^{\alpha/2} +  \int_{0}^{\eta^{k/2} |Y|^{1/2}} \left(\aver{Q_s} |f|^q\right)^{1/q} ds\right)\ \left(\text{using $\eta^k \leq \eta^{k/2}$ and $\omega(\eta^{k}) \leq \omega(\eta^{k/2})$}\right)
\notag
\\
& = C |X| \left(  \omega(\eta^{k/2}) |Y|^{\alpha/2} +  \int_{0}^{ |X|^{1/2}} \left(\aver{Q_s} |f|^q\right)^{1/q} ds\right).
\notag
\end{align}
Now, let $j$ be the smallest integer such that  $|Y| \leq  \eta^{j}$. Then, we have that
\begin{align}\label{b8}
  \omega(\eta^{k/2}) |Y|^{\alpha/2}& \leq \omega(\eta^{k/2}) \eta^{j\alpha/2}
\\
& =\frac{1}{\ve_0}  \sum_{i=j/2}^{\frac{k+j}{2}} \eta^{ i\alpha} \omega_1(\eta^{\frac{k+j}{2} - i}) \leq \omega(\eta^{\frac{k+j}{2}})
\notag
\\
& \leq C\bigg[ \left(\int_{0}^{ 1} \left(\aver{Q_s} |f|^q\right)^{1/q} ds\right) |X|^{\alpha/4} +  \int_{0}^{ |X|^{1/4}} \left(\aver{Q_s} |f|^q\right)^{1/q} ds \bigg] 
\notag
\\
&\leq C K_0(4|X|)\quad (\text{using $Y=(\eta^{-k} x, \eta^{-2k} t)$}),
\notag
\end{align}
where the last inequality in \eqref{b8} follows from a computation as in \eqref{b0}. This implies that \eqref{des} holds with $\tilde L x\overset{def}= \langle \omega(\eta^k) A_0, x\rangle $, when $|X| \leq \eta^k$. 

Now when $|X| \geq \eta^k$,   one can show  that 
\begin{equation}\label{cl1}
|u(x,t) | \leq C |X| \omega(2|X|) \leq C |X|  K_0(4 |X|).
\end{equation}
This follows from the fact that with $L_ix\overset{def}= \langle B_i,x\rangle$ we have for $i=0,\ldots, k-1$,
\[
\|u-L_i\|_{L^{\infty}(Q_{\eta^i})}  \leq \delta_0 \eta^i \omega( \eta^i) 
\]
and
\[
| B_i| \leq 2 \omega(\eta^i)
\]
because \eqref{H} holds upto $k-1$.  Moreover, for $i=k$, we  again   have
\[
\|u-L_k\|_{L^{\infty}(Q_{\eta^k})}  \leq \delta_0 \eta^k \omega( \eta^k).
\]
In this case, instead the following bound holds
\[
|B_k| \leq C \omega(\eta^{k-1}) \leq \frac{C \omega(\eta^k)}{\eta^\alpha}\ \text{(using $\alpha-$decreasing property of $\omega$).}
\]
Using these estimates, it is easy to see that   \eqref{cl1}  holds. Now note that with   \linebreak $\tilde L x\overset{def}= \langle \tilde B, x\rangle $, where $\tilde B\overset{def}= \omega(\eta^k) A_0$,  we also  have  the following bound
\begin{equation}\label{ai}
 |\tilde B| \leq C \omega(\eta^k).
 \end{equation}
Therefore, it follows from \eqref{cl1} and the estimate \eqref{ai} above that 
  \begin{equation}
  |u(x,t)- \tilde L(x) | \leq C |X| K_0(4|X|)
  \end{equation}
  also holds when $|X| \geq \eta^k$, for a possibly different $C$.  Hence  the estimate in \eqref{des} follows with $\tilde L x\overset{def}=\langle \tilde B, x\rangle $  and this finishes the proof of the theorem.

\end{proof}

\subsection{Proof of Theorem \ref{main1}}
In this subsection, we assume that $u$ is a $W^{2,1,m}$ viscosity solution to
\begin{equation}\label{m2}
\bigg(\delta_{ij} + (p-2) \frac{u_i u_j}{|\nabla u|^2} \bigg) u_{ij} - u_t =f,
\end{equation}
where $f \in L^{m}$ for some $m>n+2$.  We now state and prove the counterparts of the approximation lemmas in this situation.  The analogue of Lemma \ref{app1} is as follows.

\begin{lem}\label{ap01}
Let $u$ be a  $W^{2,1, m}$ viscosity solution to
\begin{equation}
\bigg(\delta_{ij} + (p-2) \frac{(\delta u_i+A_i)(\delta u_j +A_j)}{ |\delta \nabla u+A|^2} \bigg) u_{ij}- u_t=f\quad \text{in $Q_1$},
\end{equation}
 with $|u| \leq 1$ and $|A| \geq 1$.  Given $\tau>0$, there exists  $\delta_0=\delta_0(\tau)>0$  such that if  
\[
\left(\frac{1}{|Q_{3/4}|} \int_{Q_{3/4}}  |f|^m \right)^{1/m} \leq \delta_0,
\]
then $\|w-u\|_{L^{\infty}(Q_{1/2})} \leq \tau$ for some $w \in C^{2,1}(\overline{Q_{1/2}})$ with universal $C^{2,1}$ bounds depending only on $n, p$ and independent of $|A|$. 
\end{lem}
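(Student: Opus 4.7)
\medskip

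\noindent\textbf{Proof proposal.} The plan is to follow verbatim the compactness-and-contradiction scheme already used for Lemma \ref{app1}, exploiting the fact that the hypothesis $f \in L^m$ with $m>n+2$ is strictly stronger than the $L^{n+1}$ hypothesis used there (and so both the Krylov--Safonov Hölder estimate from \cite[Lemma~5.1]{CCS} and the Pucci solvability from \cite[Theorem~2.8]{CCS} go through without modification). First, I would argue by contradiction: assume there is $\tau_0>0$ and a sequence $(u_k,f_k,\delta_k,A_k)$ with $|u_k|\le1$, $|A_k|\ge1$, $\|f_k\|_{L^m(Q_{3/4})}\to 0$ and (implicitly) $\delta_k\to 0$, but no $u_k$ is $\tau_0$-close in $L^\infty(Q_{1/2})$ to any $C^{2,1}$ function with universal bounds. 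Rewrite the equation in the normalized form (3.5) with $\tilde\delta_k = \delta_k/|A_k|$ and $\tilde A_k = A_k/|A_k|$; then $\tilde\delta_k\to 0$ while $|\tilde A_k|=1$, so up to subsequence $\tilde A_k\to A_0$ with $|A_0|=1$.

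Next, each $u_k$ lies in the Pucci class $\mathcal{S}(\lambda,\Lambda,f_k)$ with $\lambda,\Lambda$ as in \eqref{l}, and the $L^m$-norms of $f_k$ tend to zero; since $m>n+2$, the Krylov--Safonov-type estimate of \cite[Lemma~5.1]{CCS} gives a uniform H\"older bound on $\overline{Q_{3/5}}$, so Arzela--Ascoli produces a uniform limit $u_k\to u_0$ on $Q_{3/5}$.

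The main step (and main obstacle) is to show that $u_0$ is a viscosity solution of the constant-coefficient uniformly parabolic linear equation
\[
\bigl(\delta_{ij} + (p-2)(A_0)_i(A_0)_j\bigr)(u_0)_{ij} - (u_0)_t = 0 \qquad\text{in } Q_{1/2}.
\]
The classical stability theorem of \cite{CCS} is not applicable directly because of the singular dependence of $F$ on the gradient variable, so I would reproduce the perturbed test-function argument from the proof of Lemma \ref{app1}. Given a $C^{2,1}$ test function $\phi$ strictly touching $u_0$ from above at $(x_0,t_0)\in Q_{1/2}$ and supposing the reverse inequality held, I would construct a corrector $\phi_k\in W^{2,1,m}$ as the strong solution of the Pucci boundary value problem
\[
\mathcal{P}^{+}_{\lambda,\Lambda}(\nabla^2\phi_k) + C_0|\tilde A_k - A_0| + C_0\tilde\delta_k|\nabla\phi_k| + C_0\tilde\delta_k|\nabla\phi| - (\phi_k)_t = f_k
\]
in $Q_r(x_0,t_0)$ with zero parabolic boundary data, whose existence is provided by \cite[Theorem~2.8]{CCS} in precisely our range $m>n+2$. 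The Alexandrov--Bakelman--Pucci-type bound \cite[Proposition~2.6]{CCS} gives $\|\phi_k\|_{L^\infty}\to 0$. Splitting into the cases $|A_0-(\tilde A_k+\tilde\delta_k\nabla(\phi+\phi_k))|<1/2$ and $\ge 1/2$ (using Lipschitz continuity of $a\mapsto a_i a_j/|a|^2$ away from zero in the first, and triangle-inequality slack in the second, exactly as in the derivation of (3.10)--(3.12) for Lemma \ref{app1}) produces a perturbed differential inequality of the form
\[
F_k^{*}\bigl(\nabla(\phi+\phi_k),\nabla^2(\phi+\phi_k)\bigr) - (\phi+\phi_k)_t \le f_k - \varepsilon
\]
on $Q_r(x_0,t_0)$. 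For large $k$ the strict touching from above forces $(\phi+\phi_k)-u_k$ to attain an interior minimum in $Q_r(x_0,t_0)$, contradicting the viscosity formulation for $u_k$. The symmetric argument for touching from below gives the full equation for $u_0$.

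Finally, since $|A_0|=1$ the limiting equation is uniformly parabolic with constant coefficients, and classical parabolic theory gives universal $C^{2,1}$ bounds on $u_0$ on $\overline{Q_{1/2}}$. As $u_k\to u_0$ uniformly, $u_k$ is eventually $\tau_0$-close to this $C^{2,1}$ limit, the desired contradiction. The only substantive difficulty is the stability/perturbation step, which is resolved by the explicit corrector $\phi_k$; the $L^m$ hypothesis is in fact more convenient than $L^{n+1}$ both for solvability of the Pucci equation for $\phi_k$ and for the maximum-principle bound $\|\phi_k\|_{L^\infty}\to 0$, and the rest of the argument is unchanged from Lemma \ref{app1}.
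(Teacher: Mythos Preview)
Your proposal is correct and follows exactly the approach the paper intends: the paper's own proof of Lemma~\ref{ap01} reads in its entirety ``The proof is identical to that of Lemma~\ref{app1} and so we omit the details,'' and you have faithfully reproduced that argument with the $L^q$ replaced by $L^m$. Your parenthetical remark that $\delta_k\to 0$ is ``implicit'' is a fair observation---the statement of Lemma~\ref{ap01} as written omits the smallness condition on $\delta$ that appears in Lemma~\ref{app1}, but the proof (and its later use in Lemma~\ref{ap02}) clearly requires it.
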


\begin{proof}
The proof is identical to that of Lemma \ref{app1} and so we omit the details.

\end{proof}

We now state the counterpart of  Lemma \ref{ap2}.

\begin{lem}\label{ap02}
Let $u$ be  a viscosity  solution to
\[
\bigg(\delta_{ij}+ (p-2) \frac{u_i u_j}{|\nabla u|^2} \bigg) u_{ij} - u_t=f
\]
in $Q_1$ with  $u(0,0)=0$. There exists a universal $\delta_0>0$, such that if for some  $A \in \R^n$ satisfying  $M \geq|A| \geq 2$  we have 
\[
\|u- \langle A,x\rangle \|_{L^{\infty}(Q_1)} \leq \delta_0,
\]
and also 
\[
\|f\|_{L^m(Q_1)}  \leq \delta_0^2,
\] 
then there exists an affine function  $ L_0$, with universal bounds depending also on $M$, such that
\begin{equation*}
|u(x,t)-  L_0(x) | \leq C|X|^{1+\alpha_0},
\end{equation*}
where $\alpha_0 < \min(\alpha, 1-\frac{n+2}{m})$. 
\end{lem}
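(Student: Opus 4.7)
The plan is to run the same geometric iteration scheme as in Lemma \ref{ap2}, but replace the potential-driven modulus $\omega$ by a fixed geometric rate $r^{k\alpha_0}$. More precisely, I would construct, inductively for $k = 0, 1, 2, \ldots$, affine functions $\tilde L_k(x) = \langle A_k, x\rangle$ with $A_0 = A$ such that, for a small universal $r \in (0,1)$ to be chosen,
\begin{equation*}
\|u - \tilde L_k\|_{L^{\infty}(Q_{r^k})} \leq \delta_0\, r^{k(1+\alpha_0)}, \qquad |A_k - A_{k-1}| \leq C\, \delta_0\, r^{(k-1)\alpha_0}.
\end{equation*}
The base case $k=0$ is the hypothesis. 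Granting these bounds up to step $k$, a telescoping argument together with the summability of $\sum r^{i\alpha_0}$ preserves the non-degeneracy $|A_k| \geq 1$, which is what allows us to repeatedly invoke Lemma \ref{ap01} along the iteration.

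For the inductive step at level $k$, I would rescale
\begin{equation*}
v(x,t) := \frac{(u - \tilde L_k)(r^k x,\, r^{2k} t)}{\delta_0\, r^{k(1+\alpha_0)}},
\end{equation*}
so that $|v| \leq 1$ and $v(0,0)=0$. A direct computation shows that $v$ satisfies an equation of the form treated by Lemma \ref{ap01}, with small-slope parameter $\tilde\delta_k \sim \delta_0\, r^{k\alpha_0}$, translation vector $A_k$, and right-hand side
\begin{equation*}
f_k(x,t) = \frac{r^{k(1-\alpha_0)}}{\delta_0}\, f(r^k x,\, r^{2k} t).
\end{equation*}
The key scale-invariance computation is
\begin{equation*}
\|f_k\|_{L^m(Q_{3/4})} \leq \delta_0^{-1}\, r^{k\bigl(1 - \alpha_0 - (n+2)/m\bigr)}\, \|f\|_{L^m(Q_1)} \leq \delta_0,
\end{equation*}
which uses the hypothesis $\|f\|_{L^m(Q_1)} \leq \delta_0^2$ together with the crucial constraint $\alpha_0 < 1 - (n+2)/m$. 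Applying Lemma \ref{ap01} with $\tau := r^{1+\alpha_0}/4$ produces $w \in C^{2,1}(\overline{Q_{1/2}})$ with universal $C^{2,1}$ bounds such that $\|v - w\|_{L^{\infty}(Q_{1/2})} \leq \tau$. Since $w$ has universal $C^{2,1}$ control and $|w(0,0)| \leq \tau$ (because $v(0,0)=0$), its affine approximation at the origin gives some $b \in \mathbb{R}^n$ with
\begin{equation*}
\|v - \langle b, x\rangle\|_{L^{\infty}(Q_r)} \leq 2\tau + C r^2 \leq r^{1+\alpha_0},
\end{equation*}
where $r$ is chosen so that $C r^2 = r^{1+\alpha_0}/2$, possible because $\alpha_0 < 1$. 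Setting $A_{k+1} := A_k + \delta_0\, r^{k\alpha_0}\, b$ and scaling back closes the induction.

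Once the iteration is in place, the affine function $L_0 := \lim_k \tilde L_k$ is well defined because the second inequality above makes $\{A_k\}$ a Cauchy sequence, and its limit is universally bounded in terms of $M$. The claimed estimate $|u(x,t) - L_0(x)| \leq C|X|^{1+\alpha_0}$ follows by the standard dyadic argument: given $(x,t)$, pick $k$ with $r^{k+1} < |X| \leq r^k$, insert $\tilde L_k$, and use $|L_0 - \tilde L_k|_{L^{\infty}(Q_{|X|})} \lesssim |X| \sum_{j \geq k} r^{j\alpha_0} \lesssim |X|\cdot r^{k\alpha_0} \lesssim |X|^{1+\alpha_0}$. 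The main obstacle, and the reason for the threshold $\alpha_0 < 1 - (n+2)/m$, is the scale-invariance computation for $\|f_k\|_{L^m}$: unlike the borderline case where the potential absorbs each dyadic contribution, here one needs a strict gain in powers of $r$ per iteration, which is exactly the positivity of the exponent $1 - \alpha_0 - (n+2)/m$. The second constraint $\alpha_0 < \alpha$ is inherited from Lemma \ref{ap01} via the fixed approximation $w$, as in the proof of Lemma \ref{ap2}.
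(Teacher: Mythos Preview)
Your proposal is correct and follows essentially the same approach as the paper: an inductive construction of affine approximations $\tilde L_k$ at scales $r^k$ with geometric rate $r^{k(1+\alpha_0)}$, rescaling to apply Lemma~\ref{ap01} at each step, and the key scale-invariance computation $\|f_k\|_{L^m} \leq \delta_0^{-1} r^{k(1-\alpha_0-(n+2)/m)}\|f\|_{L^m(Q_1)} \leq \delta_0$, which is exactly where the paper invokes $\alpha_0 < 1-(n+2)/m$. The paper's own proof is terser---it writes down the rescaled function and the $L^m$ estimate for $f_k$, then simply says ``at this point we can repeat the arguments in the proof of Lemma~\ref{ap2}''---whereas you have spelled out the choice of $r$ and $\tau$ and the passage to the limit $L_0 = \lim_k \tilde L_k$; but the content is the same.

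One small remark on your final sentence: the constraint $\alpha_0 < \alpha$ is not actually used in the proof of this lemma (your choice $Cr^2 = r^{1+\alpha_0}/2$ only needs $\alpha_0 < 1$, and Lemma~\ref{ap01} carries no $\alpha$). That constraint appears in the statement so that $\alpha_0$ is compatible with Lemma~\ref{rt1} in the proof of Theorem~\ref{main1}, where comparison with $H^{1,\beta}$ solutions of \eqref{nplap} forces $\alpha_0 < \alpha < \beta$.
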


\begin{proof}
As in the proof of Lemma \ref{ap2}, we show that  for every $k \in \mathbb{N}$, there exists affine functions $\tilde L_kx = \langle A_k ,x\rangle $ such that
\begin{equation}\label{itt}
\begin{cases}
\|u- \tilde L_k x\|_{L^{\infty}(Q_{r^k})}\leq \delta_0 r^{k(1+\alpha_0)},
\\
|A_k - A_{k+1}| \leq C \delta_0 r^{k\alpha_0},
\end{cases}
\end{equation}
for some $r<1$ universal independent of $\delta_0$. The conclusion of the lemma then follows from \eqref{itt} in a standard way. 
We first observe that \eqref{itt} holds for $k=0$ with $A_0=A$. Moreover the non-degeneracy condition as in  \eqref{ndeg} is easily verified in this situation provided $\delta_0$ is small enough. Now assume \eqref{itt} holds upto some $k$.  We then define 
\[
v= \frac{u-\tilde L_k (r^k x, r^{2k} t)}{\delta_0r^{k(1+\alpha_0)}}.
\]
Then $v$ solves in $B_1$
\[
\bigg(\delta_{ij} + (p-2) \frac{ (\delta_0 r^{k \alpha_0} v_i + (A_k)_i) (\delta_0 r^{k \alpha_0} v_j + (A_k)_j)}{| \delta_0 r^{k\alpha_0} \nabla v + A_k|^2} \bigg) v_{ij} - v_t= f_k,
\]
where $f_k$ is defined as
\[
f_k(x,t)= r^{k(1-\alpha_0)} \frac{f(r^k x, r^{2k} t)}{\delta_0}.
\]
Now by change of variable it is seen that
\[
\|f_k\|_{L^m(Q_1)} = r^{k(1-\frac{n+2}{m} - \alpha_0)} \frac{1}{\delta_0} \|f\|_{L^m(Q_{r^k})} \leq \delta_0.
\]
Note that over here, we crucially used the hypothesis of the lemma i.e,
\[
\|f\|_{L^m(Q_1)} \leq \delta_0^2,
\]
and the fact that  $\alpha_0 < 1-\frac{n+2}{m}$.  Therefore, $v$ satisfies the hypothesis of Lemma \ref{ap01} and at this point  we can repeat the arguments in the proof of Lemma \ref{ap2} to conclude that there exists $ \tilde L_{k+1}(x) = \tilde L_k (x) + \delta_0 r^{k(1+\alpha_0)}L( \frac{x}{r^k})$, where $L$ has universal bounds such that \eqref{itt} holds for $k+1$. This verifies the induction step and the conclusion of the lemma thus follows.

\end{proof}

We also have the following lemma which is the analogue of Lemma \ref{rt}.

\begin{lem}\label{rt1}
Let $u$ be a solution of
\begin{equation}\label{j10}
\bigg(\delta_{ij}+ (p-2) \frac{u_i u_j}{|\nabla u|^2} \bigg) u_{ij} - u_t=f \quad \text{in $Q_1$},
\end{equation}
 with $|u| \leq 3$ and $u(0,0)=0$.  There exists  a universal $\ve_0>0$ such that if 
\begin{equation}\label{sml5}
\|f\|_{L^m(Q_1)} \leq \ve_0,
\end{equation}
then there exists an affine function $L$  with universal bounds  and a universal $\eta \in (0,1)$  such that
\[
\|u- L\|_{L^{\infty}(Q_{\eta})} \leq \delta_0 \eta^{1+\alpha_0},
\]
where $\delta_0$ is as in Lemma \ref{ap02} above. Without loss of generality we may take $\ve_0 < \delta_0^2$. 

\end{lem}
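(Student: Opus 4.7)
My plan is to mirror the proof of Lemma \ref{rt}, with only minor adjustments tailored to the $L^m$ hypothesis with $m > n+2$ (in place of the scaling critical integral smallness used there).

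First, I would establish the following approximation statement: given $\kappa > 0$, there exists $\ve_0 > 0$ such that if $u$ solves \eqref{j10} in $Q_1$ with $|u| \leq 3$ and $\|f\|_{L^m(Q_1)} \leq \ve_0$, then there exists a $C^{2,1}$ viscosity solution $w$ to the homogeneous normalized $p$-parabolic equation $\Delta_p^N w - w_t = 0$ with $\|w - u\|_{L^\infty(Q_{1/2})} \leq \kappa$. The proof goes by contradiction: take a sequence $\{u_k, f_k\}$ with $\|f_k\|_{L^m(Q_1)} \to 0$, each $u_k$ solving \eqref{j10}, and no such $w$ within distance $\kappa_0 > 0$. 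Since $m > n+2 > q = n+1$, H\"older's inequality gives $\|f_k\|_{L^q(Q_{3/4})} \to 0$, so by the Krylov–Safonov estimates from \cite[Lemma 5.1]{CCS} and Arzel\`a–Ascoli, up to a subsequence $u_k \to u_0$ uniformly on $Q_{1/2}$.

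The next step is to identify $u_0$ as a viscosity solution of $\Delta_p^N u_0 - (u_0)_t = 0$. Here I would reuse verbatim the two-case stability argument performed in Lemma \ref{rt}, based on the alternate characterization of viscosity solutions in Lemma \ref{equiv1}, namely by testing only against $C^{2,1}$ functions with $\nabla \phi(x_0,t_0) \neq 0$ or with $\nabla \phi(x_0,t_0) = 0$ and $\nabla^2 \phi(x_0,t_0) = 0$. The perturbed test function $\phi + \phi_k$ is constructed as a strong $W^{2,1,m}$ solution of the Pucci-type equation with datum $f_k$ using \cite[Theorem 2.8]{CCS}; the generalized maximum principle gives $\|\phi_k\|_{L^\infty} \to 0$ since $\|f_k\|_{L^m} \to 0$, and the contradiction with the viscosity formulation for $u_k$ produced upon taking $k$ large is identical to that in Lemma \ref{rt}. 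Since this produces a homogeneous normalized $p$-parabolic $u_0$ that is uniformly approximated by $u_k$, we contradict non-closeness for large $k$, establishing the approximation claim.

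Having produced such a $w$, I would invoke the $H^{1,\beta}$ regularity from \cite{JS} to obtain an affine $L$ with
\[
|w(x,t) - L(x)| \leq C(|x|^2 + |t|)^{(1+\beta)/2} \quad \text{for } (x,t) \in Q_{1/2},
\]
and universal bounds on the coefficients of $L$. Choose $\eta \in (0,1)$ so that $C \eta^{1+\beta} = \tfrac{\delta_0}{2} \eta^{1+\alpha_0}$, which is possible because $\alpha_0 < \alpha < \beta$ by hypothesis; then set $\kappa = \tfrac{\delta_0}{2} \eta^{1+\alpha_0}$, which in turn fixes $\ve_0$. Triangle inequality then yields $\|u - L\|_{L^\infty(Q_\eta)} \leq \delta_0 \eta^{1+\alpha_0}$. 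Taking $\ve_0$ smaller if necessary ensures $\ve_0 < \delta_0^2$.

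The main obstacle is the stability/compactness step, since the classical viscosity stability of \cite{CCS} fails due to the singular dependence of $\Delta_p^N$ on the gradient. However this is exactly the difficulty already handled in Lemma \ref{rt} via the perturbed test function construction, so no essentially new idea is required; the only subtle point is verifying that the $L^m$ smallness condition suffices both for the Krylov–Safonov compactness (which needs only $L^{n+1}$ by H\"older) and for the generalized maximum principle giving $\|\phi_k\|_{L^\infty} \to 0$ (which applies directly in $L^m$).
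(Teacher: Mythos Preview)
Your proposal is correct and matches the paper's approach exactly: the paper states that the proof of Lemma \ref{rt1} is identical to that of Lemma \ref{rt} and omits the details, and you have faithfully reproduced that argument with the appropriate replacement of $\alpha$ by $\alpha_0$ and the $L^m$ smallness in place of the integral condition. The only additions you make (the H\"older inequality to pass from $L^m$ to $L^{n+1}$, and the observation that $\alpha_0 < \alpha < \beta$ permits the choice of $\eta$) are routine verifications implicit in the paper's claim that the proof is identical.
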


\begin{proof}
The proof is again identical to that of Lemma \ref{rt} and thus we skip the details. 

\end{proof}

With Lemmas \ref{ap01}--\ref{rt1} in hand, we now proceed with the proof of Theorem \ref{main1}. 

\begin{proof}[ Proof of Theorem \ref{main1}]
It suffices to show that at $(0,0)$, there exists an affine function $\tilde L$  with universal bounds such that 
\begin{equation}\label{des1}
|u(x,t)- \tilde L(x)| \leq C|X|^{1+\alpha_0}. 
\end{equation}
Recall $|X|=\text{max}(|x|, |t|^{1/2})$.  We also assume that $u(0,0)=0$.  Now with  $\eta, \ve_0$ as in Lemma \ref{rt1} and  $\delta_0$ as in  Lemma \ref{ap02},   assume the following hypothesis for a given $i \in \mathbb{N}$,
\begin{equation}\label{h1}[H1]
\begin{cases}
\text{There exists affine function $L_i(x)\overset{def}= \langle B_i, x\rangle $ such that}\  \|u-L_i\|_{L^{\infty}(Q_{\eta^i})}  \leq \frac{\delta_0}{\ve_0} \eta^{i (1+\alpha_0)}
\\
\text{and}\ |B_i|  \leq  \frac{2}{\ve_0} \eta^{i\alpha_0}.
\end{cases}
\end{equation}
By multiplying $u$ with a suitable constant, we may assume that the hypothesis  holds for $i=0$ with $L_0=0$.  We can also assume that 
\begin{equation}\label{nor}
\|f\|_{L^m(Q_1)} \leq 1.
\end{equation}

Let $k$ be the smallest integer such that \eqref{h1} fails. Then as in the proof of Theorem \ref{main}, there are two possibilities.

\medskip

\emph{Case 1:} Suppose $k=\infty$. Then in this case, \eqref{des1} is seen to hold with $\tilde L=0$. 

\medskip

\emph{Case 2:} Suppose instead that $k<\infty$. Then we have that the hypothesis is satisfied upto $k-1$.  As before, we let
\[
v(x,t)= \ve_0 \frac{u(\eta^{k-1}x, \eta^{2(k-1)}t)}{\eta^{(k-1)(1+\alpha_0)}},
\]
which solves in $B_1$
\[
\bigg( \delta_{ij}+ (p-2) \frac{v_i v_j}{|\nabla v|^2} \bigg) v_{ij} - v_t = f_k,
\]
where 
\[
f_k(x,t) = \ve_0 \eta^{(k-1)(1-\alpha_0)} f(\eta^{k-1} x, \eta^{2(k-1)} t).
\]
Then  by change of variable and \eqref{nor}, it  is again seen that  $\|f_k\|_{L^m} \leq \ve_0 $. Moreover, from \eqref{h1} and triangle inequality it follows that $|v| \leq 2 + \delta_0 \leq 3$.  Thus the hypothesis of Lemma \ref{rt1} is satisfied and consequently there exists $L x= \langle \tilde A, x\rangle $ affine such that
\[
\|v- L\|_{L^{\infty}(Q_\eta)} \leq \delta_0 \eta^{1+\alpha_0}.
\]
By scaling back to $u$, we obtain with $L_k x\overset{def}=  B_k x$, with $B_k= \frac{\eta^{(k-1)\alpha_0}}{\ve_0} \tilde A $, that the following holds,
\[
\|u-L_k\|_{L^{\infty}(Q_{\eta^k})} \leq \frac{\delta_0}{\ve_0} \eta^{k(1+\alpha_0)}.
\]
However since Statement $[H1]$ fails,  we  must necessarily have
\[
|B_k| \geq \frac{2}{\ve_0} \eta^{k\alpha_0}.
\]
If we now let
\[
\tilde v(x,t)= \ve_0 \frac{u(\eta^k x, \eta^{2k} t)}{\eta^{k(1+\alpha_0)}},
\]
then, as in the proof of Theorem \ref{main}, it can be easily checked  that $\tilde v$ solves an equation of the type \eqref{m} such that the hypothesis of Lemma \ref{ap02} is verified. Hence there exists an affine function $L_0 x= \langle A_0, x\rangle $,  with universal bounds depending on $\eta$, such that
\[
|\tilde v- L_0 x| \leq C|X|^{1+\alpha_0}.
\]
By scaling back to $u$ we obtain  that, with $\tilde L(x)\overset{def}= \frac{\eta^{k\alpha_0}}{\ve_0}\langle  A_0, x\rangle $, the following estimate holds for $|X| \leq \eta^k$, 
\begin{equation}\label{g10}
|u(x,t)- \tilde L(x)| \leq C|X|^{1+\alpha_0}.
\end{equation}
The rest of the argument is again the same  as in  the proof of Theorem \ref{main}, which allows us to conclude that the estimate \eqref{g10} holds also when $|X| \geq \eta^k$. This finishes the proof of the theorem.

\end{proof}

  \end{document}